\newtheorem{theorem}{Theorem}[section]
\newtheorem{corollary}[theorem]{Corollary}
\newtheorem{lemma}[theorem]{Lemma}
\theoremstyle{definition}
\theoremstyle{remark}
\newtheorem*{remark}{Remark}
\newtheorem*{acknowledgement}{Acknowledgement}
\numberwithin{equation}{section}
\def\R{{\mathbb R}}
\begin{document}

\author{Jonathan Bennett}
\address{Jonathan Bennett, School of Mathematics, University of Birmingham, Edgbaston, Birmingham, B15 2TT, UK}
\email{j.bennett@bham.ac.uk}

\author{Neal Bez} 
\address{Neal Bez, Department of Mathematics, Graduate School of Science and Engineering,
Saitama University, Saitama 338-8570, Japan}
\email{nealbez@mail.saitama-u.ac.jp}

\author{Chris Jeavons} 
\address{Chris Jeavons, School of Mathematics, University of Birmingham, Edgbaston, Birmingham, B15 2TT, UK}
\email{jeavonsc@maths.bham.ac.uk}

\author{Nikolaos Pattakos}
\address{Nikolaos Pattakos, School of Mathematics, University of Birmingham, Edgbaston, Birmingham, B15 2TT, UK}
\email{nikolaos.pattakos@gmail.com}

\date{\today}

\subjclass[2010]{35B45 (primary); 35Q40 (secondary)}
\keywords{Bilinear estimates,
Schr\"odinger equation, sharp constants}

\title{On sharp bilinear Strichartz estimates of Ozawa--Tsutsumi type}
\maketitle \thispagestyle{empty}

\begin{abstract}
We provide a comprehensive analysis of sharp bilinear estimates of Ozawa--Tsutsumi type for solutions $u$
of the free Schr\"odinger equation, which give sharp control on $|u|^2$ in classical Sobolev spaces. In particular, we generalise their estimates in such a way that provides
a unification with some sharp bilinear estimates proved by Carneiro and Planchon--Vega, via entirely different methods, by
seeing them all as special cases of a one-parameter family of sharp estimates. The extremal functions are solutions of the Maxwell--Boltzmann functional equation and hence Gaussian. 
For $u^2$ we argue that the natural analogous results involve certain dispersive Sobolev norms; in particular, despite the validity of the classical Ozawa--Tsutsumi estimates for both $|u|^2$ and $u^2$ in the classical Sobolev spaces, we show that Gaussians are not extremisers in the latter case for spatial dimensions strictly greater than two.
\end{abstract}

\section{Introduction}
For $d \geq 2$, consider the free Schr\"odinger equation
\begin{align} \label{e:Schro}
i\partial_t u +  \Delta u = 0, \qquad u(0) = u_0
\end{align}
on $\mathbb{R}^{1+d}$ with initial data $u_0 \in L^2(\mathbb{R}^d)$. In \cite{OT}, Ozawa and Tsutsumi showed that any two solutions $u$ and $v$ of \eqref{e:Schro} with initial data $u_0$ and $v_0$, respectively, satisfy the global space-time bilinear estimate
\begin{equation} \label{e:OT}
\| (-\Delta)^{\frac{2-d}{4}} (u \overline{v}) \|_{L^2}^2 \leq \mathbf{OT}(d)  \|u_0\|_{L^2}^2 \|v_0\|_{L^2}^2,
\end{equation}
where
\[
\mathbf{OT}(d) = \frac{2^{-d} \pi^{\frac{2-d}{2}}}{\Gamma(\tfrac{d}{2})}\,.
\]
They also showed that the constant $\mathbf{OT}(d)$ is optimal by observing that if $u_0(x) = v_0(x) = \exp(-|x|^2)$ then \eqref{e:OT} is an equality; i.e. $(u_0,v_0)$ is an extremising pair of initial data.

The case of one spatial dimension is rather special and in this case \eqref{e:OT} is true as an identity
\begin{equation} \label{e:OTidentity}
\| (-\Delta)^{\frac{1}{4}} (u \overline{v}) \|^2_{L^2} = \mathbf{OT}(1)  \|u_0\|^2_{L^2} \|v_0\|^2_{L^2}
\end{equation}
for any $(u_0,v_0) \in L^2(\mathbb{R}) \times L^2(\mathbb{R})$. This identity was established in \cite{OT}, gives control on the so-called null gauge form $\partial(u\overline{v})$ for the Schr\"odinger equation in one spatial dimension, and was used as a tool in the proof of local well-posedness of some nonlinear Schr\"odinger equations with nonlinearities involving $\partial(|u|^2)u$.

In the case where $u_0$ is equal to $v_0$, one may view the estimate \eqref{e:OT} as a replacement, in the case where the initial data is in $L^2(\mathbb{R}^d)$, for the Sobolev--Strichartz estimate
\[
\| |u|^2 \|_{L^2}^2 = \| u\|_{L^4}^4 \leq C_d \| u_0 \|_{\dot{H}^{\frac{d-2}{4}}}^4
\]
which requires rather more regularity on the initial data as the dimension gets large. Here, $\dot{H}^s$ denotes the homogeneous Sobolev space with norm
\begin{equation*}
\|f\|_{\dot{H}^s} = \| (-\Delta)^{\frac{s}{2}} f \|_{L^2}.
\end{equation*}
This ``trade-off" of derivatives on the initial data on the right-hand side for derivatives on the square of the solution on the left-hand side was studied by Klainerman and Machedon \cite{KM} for solutions of the homogeneous wave equation; see \cite{FK} for a systematic study of such phenomena in the context of the wave equation, allowing also so-called hyperbolic derivatives on the left-hand side corresponding to the space-time Fourier multiplier $| |\tau| - |\xi||$.

Recently there has been considerable interest in obtaining optimal constants and the existence/shape of extremising initial data associated with space-time estimates for solutions of \eqref{e:Schro} and dispersive equations more widely. For example, it is known that if $u$ solves \eqref{e:Schro} in one spatial dimension then
\begin{equation} \label{e:FHZd=1}
\| u\|_{L^6(\mathbb{R}^{1+1})} \leq \frac{1}{12^{\frac{1}{12}}} \|u_0\|_{L^2}
\end{equation}
and in two spatial dimensions
\begin{equation} \label{e:FHZd=2}
\| u\|_{L^4(\mathbb{R}^{1+2})} \leq \frac{1}{2^{\frac{1}{2}}} \|u_0\|_{L^2}.
\end{equation}
In each case \eqref{e:FHZd=1} and \eqref{e:FHZd=2}, the constant is optimal since there is equality when $u_0(x) = \exp(-|x|^2)$. These sharp estimates were proved by Foschi \cite{Foschi} and also Hundertmark and Zharnitsky \cite{HZ}; we also note that \eqref{e:FHZd=2} follows from \eqref{e:OT} in the case $d=2$ by choosing $u_0 = v_0$, which means we have a number of proofs of this sharp estimate (see also the proofs in \cite{BBCH} and \cite{BBI}, where the emphasis is on underlying heat-flow monotonicity phenomena).

If $u_0$ is an extremiser for either \eqref{e:FHZd=1} or \eqref{e:FHZd=2} then, up to the action of certain transformations, $u_0$ must be an isotropic centred Gaussian. This complete characterisation of the set of extremising initial data (which can be found in \cite{Foschi} or \cite{HZ}; see also \cite{JiangShao} for an alternative proof for $d=2$) was used in \cite{DMR} to establish some impressive results on sharp Strichartz norms for solutions of the mass-critical nonlinear Schr\"odinger equation in spatial dimensions one and two.

Based on the approach in \cite{HZ}, Carneiro proved in \cite{Carneiro} that any two solutions $u$ and $v$ of \eqref{e:Schro} satisfy
\begin{equation} \label{e:Carneiro}
\| u v \|_{L^2}^2 \leq \mathbf{C}(d) \int_{\mathbb{R}^{2d}} |\widehat{u}_0(\zeta)|^2|\widehat{v}_0(\eta)|^2 |\zeta - \eta|^{d-2} \, \mathrm{d}\zeta \mathrm{d}\eta,
\end{equation}
where $d \geq 2$ and
\begin{equation} \label{e:Carneiroconstant}
\mathbf{C}(d) = \frac{2^{2-4d} \pi^{\frac{2-5d}{2}}}{\Gamma(\frac{d}{2})}.
\end{equation}
It was shown in \cite{Carneiro} that the constant in \eqref{e:Carneiro} is optimal and $(u_0,v_0)$ is an extremising pair if and only if $u_0(x) = v_0(x) = \exp(-|x|^2)$, up to certain transformations. Since we are dealing with explicit constants, we should clarify that we take the following Fourier transform
\[
\widehat{f}(\xi) = \int_{\mathbb{R}^d} f(x) \exp(-ix \cdot \xi) \,\mathrm{d}x.
\]
A very closely related bilinear estimate
\begin{equation} \label{e:PV}
\|(-\Delta)^{\frac{3-d}{4}}(u\overline{v}) \|_{L^2}^2 \leq \mathbf{PV}(d) \int_{\R^{2d}}|\widehat{u}_0(\zeta)|^2|\widehat{v}_0(\eta)|^2 |\zeta - \eta| \, \mathrm{d}\zeta \mathrm{d}\eta
\end{equation}
for solutions $u$ and $v$ of \eqref{e:Schro} and $d \geq 2$ is a particular case of some far-reaching identities proved by Planchon and Vega in \cite{PV} using an innovative and radically different approach to those in \cite{Carneiro}, \cite{Foschi}, \cite{HZ} and \cite{OT}. Here, the constant $\mathbf{PV}(d)$ is given by
\[
\mathbf{PV}(d) = \frac{2^{-3d}\pi^{\frac{1-5d}{2}}}{\Gamma(\frac{d+1}{2})}
\]
and can be shown to be optimal. The emphasis in \cite{PV} is not on establishing optimal constants and identifying extremisers; in fact, the explicit constant in \eqref{e:PV} and its optimality, and a characterisation of the set of extremising initial data were not discussed.

Our first main result is a unification of \eqref{e:OT}, \eqref{e:Carneiro} and \eqref{e:PV} by seeing these sharp estimates as special cases of a one-parameter family of sharp estimates. Varying this parameter represents to a trade-off of lowering the exponent on the kernel $|\zeta- \eta|$ on the right-hand side, which may be viewed as lowering the ``derivatives" on the right-hand side, with a lowering of the order of derivatives on $|u|^2$ on the left-hand side (very much in the spirit of \cite{KM}). Interestingly, we show that the extremising initial data must satisfy the so-called Maxwell--Boltzmann functional equation. This functional equation arises in the proof of Boltzmann's $H$-theorem in connection with the derivation of hydrodynamic equations from Boltzmann's equation and is known to admit only Gaussian solutions under the assumption that the input functions are integrable. From this we will deduce that the extremisers for our sharp estimates are Gaussians.

Before the statement, we introduce a little notation. First, we write
\[
\Upsilon_\lambda := \{ (f,g) : \text{$f,g : \mathbb{R}^d \to \mathbb{C}$ measurable and $I_\lambda(f,g) < \infty$}\}\,,
\]
where
\[
I_\lambda(f,g) := \int_{\R^{2d}} |\widehat{f}(\zeta)|^2 |\widehat{g}(\eta)|^2  |\zeta - \eta|^{4\lambda+d-2} \, \mathrm{d}\zeta \mathrm{d}\eta\,.
\]
Also, we let $\mathfrak{G}$ denote the class of Gaussian functions on $\mathbb{R}^d$ given by
\[
\mathfrak{G} := \{  \exp(a|\eta|^2 + b \cdot \eta + c) : \mbox{$a, c \in \mathbb{C}$, $b \in \mathbb{C}^d$ and $\mbox{Re}(a) < 0$}\}
\]
and $\mathfrak{G}_r$ will denote the subclass with radial modulus; that is,
\[
\mathfrak{G}_r := \{  \exp(a|\eta|^2 + i b \cdot \eta + c) : \mbox{$a, c \in \mathbb{C}$, $b \in \mathbb{R}^d$ and $\mbox{Re}(a) < 0$}\}.
\]

\begin{theorem} \label{t:OTCunify}
Let $d \geq 2$ and $\sigma>\frac{1-d}{4}$. Then
\begin{equation} \label{e:OT''}
\|(-\Delta)^{\sigma}(u\overline{v}) \|_{L^2}^2 \leq \mathbf{OT}(d,\sigma) \int_{\R^{2d}}|\widehat{u}_0(\zeta)|^2|\widehat{v}_0(\eta)|^2 |\zeta - \eta|^{4\sigma+d-2} \, \mathrm{d}\zeta \mathrm{d}\eta
\end{equation}
for solutions $u$ and $v$ of \eqref{e:Schro} with initial data $(u_0,v_0) \in \Upsilon_{\sigma}$. Here,
\[
\mathbf{OT}(d,\sigma) = 2^{-3d} \pi^{\frac{1-5d}{2}} \frac{\Gamma(2\sigma+\frac{d-1}{2})}{\Gamma(2\sigma + d-1)}
\]
is the optimal constant which is attained if and only if $\widehat{u}_0 \in \mathfrak{G}$ and $v_0$ is a scalar multiple of $u_0$.
\end{theorem}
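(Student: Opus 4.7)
The plan is to unfold $\|(-\Delta)^\sigma(u\overline{v})\|_{L^2}^2$ via Plancherel in both $x$ and $t$, apply a single Cauchy--Schwarz on a family of $(d-1)$-dimensional hyperplanes in frequency space, and then recover the rotationally invariant kernel $|\zeta-\eta|^{4\sigma+d-2}$ on the right-hand side by integrating an anisotropic analogue over $S^{d-1}$. The main technical point will be arranging the Cauchy--Schwarz step so that the Gamma factor in the optimal constant emerges naturally from the rotational integration, rather than by ad hoc bookkeeping.

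First I would compute
\[
\mathcal{F}_x[u\overline{v}](\xi,t) = \frac{e^{it|\xi|^2}}{(2\pi)^d}\int_{\R^d} e^{-2it\zeta\cdot\xi}\,\widehat{u}_0(\zeta)\,\overline{\widehat{v}_0(\zeta-\xi)}\,d\zeta,
\]
and parameterise $\zeta = s\hat{\xi}+\zeta^\perp$ so that the inner integral is the one-dimensional Fourier transform at frequency $2t|\xi|$ of
\[
k_\xi(s) := \int_{\hat{\xi}^\perp}\widehat{u}_0(s\hat{\xi}+\zeta^\perp)\,\overline{\widehat{v}_0(s\hat{\xi}+\zeta^\perp-\xi)}\,d\zeta^\perp.
\]
Plancherel in $t$ followed by Plancherel in $x$ would produce the identity
\[
\|(-\Delta)^\sigma(u\overline{v})\|_{L^2}^2 = \frac{1}{2^{3d}\pi^{3d-1}}\int_{\R^d}|\xi|^{4\sigma-1}\int_{\R}|k_\xi(s)|^2\,ds\,d\xi.
\]
Applying Cauchy--Schwarz on $\hat{\xi}^\perp$ gives $|k_\xi(s)|^2 \leq P_u(s;\hat{\xi})\,P_v(s-|\xi|;\hat{\xi})$ with $P_f(s;\omega):=\int_{\{\zeta\cdot\omega=s\}}|\widehat{f}|^2\,dS$. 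After passing to polar coordinates $\xi = r\omega$, substituting $s' = s - r$, and expanding the hyperplane integrals $P_u, P_v$ as integrals over $\R^d$, the bound takes the form
\[
\frac{1}{2^{3d}\pi^{3d-1}}\int_{S^{d-1}}\int_{\R^{2d}}[(\zeta-\eta)\cdot\omega]_+^{4\sigma+d-2}|\widehat{u}_0(\zeta)|^2|\widehat{v}_0(\eta)|^2\,d\zeta\,d\eta\,d\omega.
\]
The standard Beta-function identity
\[
\int_{S^{d-1}}[e\cdot\omega]_+^{4\sigma+d-2}\,d\omega = \frac{\pi^{\frac{d-1}{2}}\Gamma(2\sigma+\frac{d-1}{2})}{\Gamma(2\sigma+d-1)},\qquad e\in S^{d-1},
\]
which converges precisely under the hypothesis $\sigma > (1-d)/4$, then delivers the isotropic kernel $|\zeta-\eta|^{4\sigma+d-2}$ and assembles the constants into $\mathbf{OT}(d,\sigma)$.

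For the extremiser analysis I would track the equality condition: Cauchy--Schwarz saturates if and only if, for every $\xi\in\R^d$ and $s\in\R$, the ratio $\widehat{u}_0(\zeta)/\widehat{v}_0(\zeta-\xi)$ is constant on the hyperplane $\{\zeta\cdot\hat{\xi}=s\}$. Setting $\alpha=\zeta$, $\beta=\zeta'-\xi$, $\gamma=\zeta'$, $\delta=\zeta-\xi$ for $\zeta,\zeta'$ on the same hyperplane, this reads
\[
\widehat{u}_0(\alpha)\,\widehat{v}_0(\beta) = \widehat{u}_0(\gamma)\,\widehat{v}_0(\delta)
\]
for all quadruples with $\alpha+\beta=\gamma+\delta$ and $(\alpha-\gamma)\perp(\alpha-\delta)$; polarising $|\alpha+\beta|^2=|\gamma+\delta|^2$ shows the latter orthogonality is equivalent to energy conservation $|\alpha|^2+|\beta|^2=|\gamma|^2+|\delta|^2$, and a direct check confirms that every such conservation $4$-tuple arises in this way. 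We thereby recognise the Maxwell--Boltzmann functional equation, whose integrable solutions are gaussians sharing common quadratic and linear parts; hence $\widehat{u}_0\in\mathfrak{G}$ and $v_0$ is a scalar multiple of $u_0$. The most delicate point in this last step is verifying that the orthogonality constraint extracted from Cauchy--Schwarz equality exactly spans the full Maxwell--Boltzmann conservation manifold, so that the known gaussian characterisation can be invoked in full strength.
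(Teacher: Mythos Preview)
Your argument is correct and follows the Ozawa--Tsutsumi route rather than the paper's. The paper instead writes the quadrilinear form
\[
\|(-\Delta)^\sigma(u\overline{v})\|_{L^2}^2 = \frac{1}{(2\pi)^{3d-1}}\int_{\R^{2d}}\int_{\R^{2d}} \widehat{U}_0(\zeta)\overline{\widehat{U}_0(\eta)}\,\mathrm{d}\Sigma_\zeta(\eta)\,\mathrm{d}\zeta
\]
with $U_0=u_0\otimes v_0(-\cdot)$ and a measure $\mathrm{d}\Sigma_\zeta$ supported on the conservation set, computes its total mass (this is where the Gamma ratio appears), and then applies the arithmetic--geometric mean inequality $2\mathrm{Re}(\widehat{U}_0(\zeta)\overline{\widehat{U}_0(\eta)})\le |\widehat{U}_0(\zeta)|^2+|\widehat{U}_0(\eta)|^2$ together with the symmetry $\mathrm{d}\Sigma_\eta(\zeta)\mathrm{d}\eta=\mathrm{d}\Sigma_\zeta(\eta)\mathrm{d}\zeta$. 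Equality in AM--GM forces $\widehat{U}_0(\zeta)=\widehat{U}_0(\eta)$ on the support of $\mathrm{d}\Sigma_\zeta$, which is exactly the Maxwell--Boltzmann equation; so the two routes converge at the extremiser stage. What the paper's approach buys is that the same quadrilinear template handles $uv$ (Theorem~1.4) with only a sign change in the measure, and the AM--GM remainder $\int\int|\widehat{U}_0(\zeta)-\widehat{U}_0(\eta)|^2\,\mathrm{d}\Sigma_\zeta\mathrm{d}\zeta$ immediately gives the heat-flow monotonicity of Theorem~1.6. Your hyperplane Cauchy--Schwarz is specific to the conjugate structure in $u\overline{v}$ and does not obviously yield either of these.

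One point to tighten: you invoke the gaussian characterisation of Maxwell--Boltzmann solutions ``whose integrable solutions are gaussians'', but the hypothesis is only $I_\sigma(u_0,v_0)<\infty$, which does not directly place $\widehat{u}_0$ in $L^1$. The paper handles this by splitting on the sign of $4\sigma+d-2$: for negative exponent, Cauchy--Schwarz against a gaussian shows $e^{-|\cdot|^2}\widehat{u}_0\in L^1$; for nonnegative exponent, the reverse Hardy--Littlewood--Sobolev inequality gives $\widehat{u}_0\in L^p$ for some $p\in(0,1]$, and one treats modulus and polar part separately. You should insert a similar reduction before citing the $L^1$ classification.
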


We follow this statement with several remarks. Firstly, we have
\begin{equation*}
\mathbf{OT}(d,\sigma) = \left\{\begin{array}{lllll} (2\pi)^{-2d}\mathbf{OT}(d) & \text{if $\sigma = \frac{2-d}{4}$} \\
\mathbf{C}(d) & \text{if $\sigma = 0$} \\
\mathbf{PV}(d) & \text{if $\sigma = \frac{3-d}{4}$,}
\end{array} \right.
\end{equation*}
where the expression for $\sigma = 0$ can be verified using the duplication formula,
\[
\Gamma(z)\Gamma(z+\tfrac{1}{2}) = 2^{1-2z}\sqrt{\pi}\Gamma(2z)
\]
for the Gamma function. Hence, when $\sigma = \frac{2-d}{4}$, estimate \eqref{e:OT''} obviously coincides with \eqref{e:OT} after an application of Plancherel's theorem on the right-hand side. When $\sigma = 0$, \eqref{e:OT''} coincides with \eqref{e:Carneiro} since once the operator $(-\Delta)^\sigma$ disappears, the complex conjugate on $v$ has no effect (we will soon see that for $\sigma \neq 0$, the complex conjugate plays an important role). As billed, \eqref{e:OT''} therefore provides a natural unification of the sharp estimates \eqref{e:OT}, \eqref{e:Carneiro} and \eqref{e:PV} of Ozawa--Tsutsumi \cite{OT}, Carneiro \cite{Carneiro} and Planchon--Vega \cite{PV}, respectively.

In addition to the special cases discussed above, the case $\sigma = \frac{4-d}{4}$ is also distinguished since it leads to the kernel $|\zeta - \eta|^2$ on the right-hand side of \eqref{e:OT''} and an additional trick (which we learnt from \cite{Carneiro}) permits the sharp space-time estimates given in the forthcoming Corollary \ref{c:Ctrick}.

A new proof of the Ozawa--Tsutsumi estimate \eqref{e:OT} was given in \cite{BBI}. An advantage of this new proof was that it exposed an underlying heat-flow monotonicity phenomenon. Here, we prove \eqref{e:OT''} following the argument in \cite{BBI} with little extra work. Our main contribution in Theorem \ref{t:OTCunify} then is to unify estimates \eqref{e:OT},
\eqref{e:Carneiro} and \eqref{e:PV} in a natural way, highlight a startling connection to Boltzmann's $H$-theorem, and to establish a full characterisation of extremising initial data for every $\sigma  > \frac{1-d}{4}$. When $\sigma = \frac{2-d}{4}$, it was observed in \cite{OT} that equality holds with $u_0(x) = v_0(x) = \exp(a|x|^2)$ for any $a < 0$, and when $\sigma = 0$, a full characterisation of extremisers was provided in \cite{Carneiro}. 

The lower bound $\sigma > \frac{1-d}{4}$ is necessary; in particular, the optimal constant blows up at this threshold. For $\sigma \in (\frac{1-d}{4},\frac{2-d}{4})$ (so that, in particular, the exponent $4\sigma + d - 2$ on the kernel in \eqref{e:OT''} is negative) and $p,q \in (2,\infty)$ such that
$\frac{1}{p} + \frac{1}{q} = \frac{4\sigma + 3d - 2}{2d}$, it follows from the (forward) Hardy--Littlewood--Sobolev inequality that
\[
\mathcal{F}L^p(\mathbb{R}^d) \times \mathcal{F}L^q(\mathbb{R}^d) \subseteq \Upsilon_\sigma\,,
\]
where $\mathcal{F}L^p$ denotes the Fourier--Lebesgue space of measurable functions whose Fourier transform belongs to $L^p$; such spaces also capture smoothness by the correspondence between decay of the Fourier transform and smoothness. Thus, for such $\sigma, p$ and $q$, we obtain the estimates
\[
\|(-\Delta)^{\sigma}(u\overline{v}) \|_{L^2}^2 \leq C_{d,\sigma} \|u_0\|_{\mathcal{F}L^p}^2 \|v_0\|_{\mathcal{F}L^q}^2.
\]
with a finite, but not necessarily optimal, constant $C_{d,\sigma}$. We also remark that for such $\sigma$, via the Parseval identity, the quantity $I_\sigma(u_0,v_0)$ is given by
\[
I_\sigma(u_0,v_0) = C_{d,\sigma} \int_{\mathbb{R}^d} \frac{\widehat{\mathrm{d}\mu}(x)\widehat{\mathrm{d}\nu}(x)}{|x|^{4\sigma + 2d - 2}} \, \mathrm{d}x
\]
and is the mutual $(4\sigma + d - 2)$-dimensional energy of the measures $\mathrm{d}\mu(\eta) = |\widehat{u}_0(\eta)|^2 \mathrm{d}\eta$ and $\mathrm{d}\nu(\eta) = |\widehat{v}_0(\eta)|^2 \mathrm{d}\eta$.

When $\sigma = \frac{2-d}{4}$, clearly we have $\Upsilon_0 = L^2(\mathbb{R}^d) \times L^2(\mathbb{R}^d)$. Also, for $\sigma > \frac{2-d}{4}$, we can use the trivial upper bound
\[
I_\sigma(u_0,v_0) \lesssim \|u_0\|_{\dot{H}^{2\sigma + \frac{d-2}{2}}}^2\|v_0\|_{L^2}^2 + \|u_0\|_{L^2}^2\|v_0\|_{\dot{H}^{2\sigma + \frac{d-2}{2}}}^2
\]
so that
\[
H^{2\sigma + \frac{d-2}{2}}(\mathbb{R}^d) \times H^{2\sigma + \frac{d-2}{2}}(\mathbb{R}^d) \subseteq \Upsilon_\sigma 
\]
where, as usual, $H^s$ denotes the inhomogeneous Sobolev space $L^2 \cap \dot{H}^s$. In the special case $\sigma = \frac{4-d}{4}$ we can be more accurate and obtain the following sharp estimates of this type.
\begin{corollary} \label{c:Ctrick}
  Let $d \geq 2$. Then
  \begin{equation}\label{e:Ctrick}
\|(-\Delta)^{\frac{4-d}{4}}(|u|^2) \|_{L^2}^2 \leq \frac{2^{-d}\pi^{\frac{2-d}{2}}}{\Gamma(\tfrac{d+2}{2})} \|u_0\|_{\dot{H}^1}^2\|u_0\|_{L^2}^2
\end{equation}
for solutions $u$ of \eqref{e:Schro} with initial data $u_0 \in H^1$, and the constant is optimal. Furthermore, the initial data $u_0$ is an extremiser if and only if
$\widehat{u}_0 \in \mathfrak{G}_r$.
\end{corollary}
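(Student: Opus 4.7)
The plan is to deduce the corollary directly from Theorem \ref{t:OTCunify} with the specialised parameter $\sigma=\frac{4-d}{4}$ and $v_0=u_0$. For this choice the kernel exponent $4\sigma+d-2$ equals $2$, so the right-hand side of \eqref{e:OT''} becomes the explicit quadratic moment
\[
\int_{\R^{2d}} |\widehat{u}_0(\zeta)|^2 |\widehat{u}_0(\eta)|^2 |\zeta-\eta|^2 \, \mathrm{d}\zeta\mathrm{d}\eta,
\]
which is amenable to exact evaluation. This is the ``additional trick'' the authors allude to just after Theorem \ref{t:OTCunify}.

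Expanding $|\zeta-\eta|^2 = |\zeta|^2 + |\eta|^2 - 2\zeta\cdot\eta$ and using Plancherel (with the convention $\|f\|_{\dot{H}^s}^2 = (2\pi)^{-d}\int |\xi|^{2s}|\widehat{f}(\xi)|^2\mathrm{d}\xi$), the above integral equals
\[
2(2\pi)^{2d}\|u_0\|_{\dot{H}^1}^2 \|u_0\|_{L^2}^2 - 2\left|\int_{\R^d} \zeta |\widehat{u}_0(\zeta)|^2\,\mathrm{d}\zeta\right|^2 \leq 2(2\pi)^{2d}\|u_0\|_{\dot{H}^1}^2 \|u_0\|_{L^2}^2,
\]
the cross term being a non-negative square by Fubini. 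Multiplying the resulting constant $2(2\pi)^{2d}\mathbf{OT}(d,\tfrac{4-d}{4})$ out, one uses $2\sigma+\tfrac{d-1}{2}=\tfrac{3}{2}$ and $2\sigma+d-1=\tfrac{d+2}{2}$ together with $\Gamma(\tfrac{3}{2})=\tfrac{\sqrt{\pi}}{2}$ to verify that it collapses precisely to $\frac{2^{-d}\pi^{(2-d)/2}}{\Gamma((d+2)/2)}$. This establishes \eqref{e:Ctrick}.

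For sharpness and the characterisation of extremisers, equality in \eqref{e:Ctrick} requires equality in both the application of Theorem \ref{t:OTCunify} and the discarding of the cross term. By Theorem \ref{t:OTCunify}, the former forces $\widehat{u}_0\in\mathfrak{G}$ (the ``scalar multiple'' requirement is automatic since $u_0=v_0$). The latter forces the vanishing of $\int_{\R^d} \zeta |\widehat{u}_0(\zeta)|^2\,\mathrm{d}\zeta$, that is, the centering of the Gaussian density $|\widehat{u}_0|^2$. Writing a generic element of $\mathfrak{G}$ as $\widehat{u}_0(\eta) = \exp(a|\eta|^2 + b\cdot\eta + c)$ with $\mathrm{Re}(a)<0$, one has $|\widehat{u}_0(\eta)|^2 = \exp(2\mathrm{Re}(a)|\eta|^2 + 2\mathrm{Re}(b)\cdot\eta + 2\mathrm{Re}(c))$, whose mean vanishes precisely when $\mathrm{Re}(b)=0$, i.e.\ $b\in i\R^d$. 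This is exactly the condition $\widehat{u}_0\in\mathfrak{G}_r$. Conversely, any such $\widehat{u}_0$ has a radial modulus squared, so the cross term vanishes and equality holds throughout, confirming optimality of the constant.

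No genuine obstacle arises: the work is entirely algebraic bookkeeping of constants and combining two equality conditions. The only mild subtlety is verifying that the two equality conditions intersect in a non-empty, explicitly describable class, and this turns out to be the clean statement $\widehat{u}_0\in\mathfrak{G}_r$.
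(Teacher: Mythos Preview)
Your proof is correct and follows essentially the same route as the paper: specialise Theorem~\ref{t:OTCunify} to $\sigma=\tfrac{4-d}{4}$ with $v_0=u_0$, expand $|\zeta-\eta|^2$ and discard the nonnegative cross term, then for the extremiser characterisation combine the condition $\widehat{u}_0\in\mathfrak{G}$ from Theorem~\ref{t:OTCunify} with the vanishing of that cross term. Your handling of the extremiser step is in fact slightly cleaner than the paper's---you recognise $\int \zeta|\widehat{u}_0(\zeta)|^2\,\mathrm{d}\zeta$ directly as the mean of the Gaussian density $|\widehat{u}_0|^2$, which vanishes iff $\mathrm{Re}(b)=0$, whereas the paper rotates and performs an explicit one-dimensional integration to reach the same conclusion.
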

Note that the class of extremisers is slightly smaller than in Theorem \ref{t:OTCunify}. In the case $d=4$, Corollary \ref{c:Ctrick} was proved by Carneiro \cite{Carneiro} and our result generalises this to $d \geq 2$. We remark that the case $d=2$ involves only classical derivatives, with the estimate \eqref{e:Ctrick} simplifying to
\begin{equation*}
\|\nabla(|u|^2) \|_{L^2(\mathbb{R}^{1+2})} \leq \frac{1}{2} \|u_0\|_{L^2}\|\nabla u_0\|_{L^2}
\end{equation*}
for any $u_0 \in H^1(\mathbb{R}^2)$, where the constant is optimal and attained precisely when $\widehat{u}_0 \in \mathfrak{G}_r$.

It is natural to wonder what happens when we consider $u^2$ rather $|u|^2$, or more generally, $uv$ rather than $u\overline{v}$. Taking the classical exponent $\sigma = \frac{2-d}{4}$ in the Ozawa--Tsutsumi estimates \eqref{e:OT}, we know that
\begin{equation} \label{e:OTnobar}
\|(-\Delta)^{\frac{2-d}{4}}(uv) \|_{L^2}^2 \leq C_{d} \|u_0\|_{L^2}^2 \|v_0\|_{L^2}^2
\end{equation}
holds for some finite constant $C_d$, $d \geq 2$, independent of the initial data $(u_0,v_0) \in L^2(\mathbb{R}^d) \times L^2(\mathbb{R}^d)$. This can easily be seen using Sobolev embedding, H\"older's inequality, and the mixed-norm linear Strichartz estimate $$L^2(\mathbb{R}^d) \to L^4_tL^{\frac{2d}{d-1}}_x(\mathbb{R} \times \mathbb{R}^{d})$$ for the solution of \eqref{e:Schro}.
However, our next result confirms that there is a distinction between \eqref{e:OT} and \eqref{e:OTnobar} at the level of \emph{sharp} estimates whenever $d \geq 3$.
\begin{theorem} \label{t:nobars}
  Suppose $d \geq 3$. Whenever $\widehat{u}_0 \in \mathfrak{G}$ and $v_0$ is a scalar multiple of $u_0$, then $(u_0,v_0)$
  is not a critical point for the functional
  \begin{equation} \label{e:Sfunctional}
  (u_0,v_0) \mapsto \frac{\|(-\Delta)^{\frac{2-d}{4}}(uv) \|_{L^2}}{\|u_0\|_{L^2} \|v_0\|_{L^2}}\,.
  \end{equation}
\end{theorem}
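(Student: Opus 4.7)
The plan is to derive the Euler--Lagrange equation characterising critical points of the functional \eqref{e:Sfunctional} and exhibit a perturbation of a gaussian for which the first variation does not vanish. The functional is invariant under space--time translations, parabolic scaling, rotations, overall phase, and scalar multiplication of $v_0$; moreover, because we integrate in time over all of $\mathbb{R}$, we may also freely translate in time and thereby replace a gaussian with complex parameter $a$ in the exponent by its partner with real $a$. Using these symmetries, together with the hypothesis $v_0 = c u_0$, I reduce to the canonical form $\widehat{u}_0(\eta) = \widehat{v}_0(\eta) = e^{-|\eta - k|^2}$ for some $k \in \mathbb{R}^d$. The parameter $k$---the Galilean-boost parameter---cannot be eliminated because Galilean boosts are \emph{not} a symmetry of \eqref{e:Sfunctional} (while they are for the $u\overline{v}$ version).

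Next, applying the space--time Fourier transform and Plancherel, I express $Q := \|(-\Delta)^{(2-d)/4}(uv)\|_{L^2}^2$ as a weighted integral involving the delta constraint $\delta\bigl((\eta_1 - \eta_2)\cdot(\eta_1 + \eta_2 - \xi)\bigr)$ arising from the Schr\"odinger dispersion relation $\tau + |\eta|^2 + |\xi-\eta|^2 = 0$. Differentiating $Q$ in the direction $\widehat{v}_0 \mapsto \widehat{v}_0 + \varepsilon \widehat{\phi}$ and substituting $\rho = \xi - \eta_1$ produces a linear form $\delta Q = C \cdot \mathrm{Re}\int \widehat{\phi}(\rho) A(\rho)\,d\rho$, and the Euler--Lagrange condition for criticality is $A(\rho) = \lambda\, \overline{\widehat{v}_0(\rho)}$ for some real $\lambda$. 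To evaluate $A$ at the gaussian, I exploit the identity---valid on the delta surface---that $|\eta_2 - k|^2 + |\xi - \eta_2 - k|^2$ equals $|\rho - k|^2 + |\xi - \rho - k|^2$. This lets me pull $e^{-|\rho-k|^2}$ out of the inner integral and then evaluate the remaining surface integral (a sphere of radius $|\xi - 2\rho|/2$ in $\eta_2$-space), yielding, after a change of variable,
\[
A(\rho) \;=\; e^{-|\rho-k|^2}\,\omega_{d-1}\,2^{1-d}\,H(\rho, k), \qquad H(\rho,k):=\int_{\mathbb{R}^d}\left(\frac{|\xi-\rho+k|}{|\xi+\rho+k|}\right)^{d-2} e^{-2|\xi|^2}\,d\xi.
\]

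The Euler--Lagrange equation thus reduces to $H(\rho, k)$ being independent of $\rho$, and I show this fails for $d \ge 3$ in two cases. When $k = 0$, the substitution $\xi \mapsto -\xi$ interchanges the integrand with its pointwise reciprocal, and averaging gives $2 H(\rho,0) = \int (R^{d-2}+R^{-(d-2)})e^{-2|\xi|^2}\,d\xi$ with $R = |\xi-\rho|/|\xi+\rho|$; since $R^{d-2}+R^{-(d-2)} \ge 2$ with strict inequality whenever $R \ne 1$, we get $H(\rho,0) > H(0,0)$ for every $\rho \ne 0$. When $k \ne 0$, I compute $\nabla_\rho H(\rho,k)\big|_{\rho=0}$ directly and reduce to the question of whether $\int (\xi+k)|\xi+k|^{-2} e^{-2|\xi|^2}\,d\xi$ is a nonzero vector; by rotational symmetry it is parallel to $k$, and its component along $\hat{k}$ is strictly positive after a shift and a one-dimensional sign argument on the odd function $y_\parallel/(y_\parallel^2 + r^2)$ against a non-centred gaussian. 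In either case $H$ is non-constant, so the Euler--Lagrange equation fails.

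The main obstacle I anticipate is the careful derivation of $A(\rho)$---tracking the prefactors, handling the delta distributions, and verifying the on-shell identity---and ensuring that the singular factor $|\xi|^{2-d}$ does not spoil integrability of the intermediate expressions. Once $A(\rho)$ is in closed form, the non-constancy of $H$ follows from elementary estimates, and the dichotomy between $d = 2$ (where $H \equiv \int e^{-2|\xi|^2}\,d\xi$ is trivially constant and gaussians remain critical) and $d \ge 3$ is visibly captured by the exponent $d-2$.
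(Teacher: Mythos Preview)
Your approach is essentially the same as the paper's: both derive the Euler--Lagrange condition for \eqref{e:Sfunctional}, evaluate it at gaussian data, and reduce criticality to the statement that an integral of the form $\int_{\mathbb{R}^d} \bigl(|\xi-\rho+k|/|\xi+\rho+k|\bigr)^{d-2} e^{-2|\xi|^2}\,d\xi$ is constant in $\rho$, which fails precisely when $d\ge 3$. The only differences are cosmetic---you normalise via symmetries before computing and you supply the non-constancy argument explicitly (the convexity trick for $k=0$ and the gradient computation for $k\neq 0$), whereas the paper works with general gaussian parameters and simply asserts the non-constancy at the end.
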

Instead the natural analogues of the estimates in Theorem \ref{t:OTCunify} and Corollary \ref{c:Ctrick} which preserve the class of Gaussian extremisers arise by replacing powers of $-\Delta$ with powers of $|\mathfrak{D}|$, where $\mathfrak{D} = i\partial_t + \frac{1}{2}\Delta$. Further evidence is provided by considering the case of $d=1$; we have already observed that \eqref{e:OT} is really the identity in \eqref{e:OTidentity} in one spatial dimension, and similar considerations in case of $uv$ lead us to an identity involving $\mathfrak{D}$; we expound this point in Section \ref{section:further}.

Writing $|\mathfrak{D}|^\beta$ for the Fourier multiplier operator given by $|\tau + \frac{1}{2}|\xi|^2|^\beta$, and recalling the constant $\mathbf{C}(d)$ in \eqref{e:Carneiroconstant}, we obtain the following.
\begin{theorem} \label{t:Bourgainspaceversion}
Let $d \geq 2$ and $\beta > \frac{1-d}{2}$. Then
\begin{equation} \label{e:Bourgainversion}
||\, |\mathfrak{D}|^\beta(uv)\|_{L^2}^2 \leq  2^{-2\beta} \mathbf{C}(d) \int_{\R^{2d}} |\widehat{u}_0(\zeta)|^2|\widehat{v}_0(\eta)|^2 |\zeta - \eta|^{4\beta+d-2} \, \mathrm{d}\zeta \mathrm{d}\eta
\end{equation}
for solutions $u$ and $v$ of \eqref{e:Schro} with initial data $(u_0,v_0) \in \Upsilon_\beta$. The constant is optimal and is attained if and only if
$\widehat{u}_0 \in \mathfrak{G}$ and $v_0$ is a scalar multiple of $u_0$.
\end{theorem}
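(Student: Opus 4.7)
My plan is to adapt the proof of Carneiro's estimate \eqref{e:Carneiro}, which is the $\beta=0$ case of \eqref{e:Bourgainversion}. The key algebraic observation is that the space-time Fourier transform $\widetilde{uv}(\xi,\tau)$ is naturally expressed as an integral against delta functions enforcing the conservation laws $\xi_1+\xi_2 = \xi$ and $|\xi_1|^2+|\xi_2|^2 = -\tau$, and on this constraint the polarization identity $|\xi_1|^2+|\xi_2|^2 = \tfrac{1}{2}|\xi_1+\xi_2|^2 + \tfrac{1}{2}|\xi_1-\xi_2|^2$ yields
\[
\bigl|\tau + \tfrac{1}{2}|\xi|^2\bigr|^{\beta} = 2^{-\beta}|\xi_1-\xi_2|^{2\beta}.
\]
Thus $|\mathfrak{D}|^\beta$ acts within this Fourier integral representation of $uv$ simply as multiplication by $2^{-\beta}|\xi_1-\xi_2|^{2\beta}$; squaring this weight through the subsequent $L^2$ computation produces both the prefactor $2^{-2\beta}$ and the kernel exponent $4\beta+d-2$ on the right-hand side of \eqref{e:Bourgainversion}.

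Concretely, I would apply Plancherel in space-time to expand $\||\mathfrak{D}|^\beta(uv)\|_{L^2}^2$, obtaining a quadruple integral in $(\xi_1,\xi_2,\xi_1',\xi_2')$ with two $\delta$-constraints enforcing conservation of momentum ($\xi_1+\xi_2 = \xi_1'+\xi_2'$) and energy ($|\xi_1|^2+|\xi_2|^2 = |\xi_1'|^2+|\xi_2'|^2$), together with the added weight $|\xi_1-\xi_2|^{2\beta}|\xi_1'-\xi_2'|^{2\beta}$. Passing to centre-of-mass coordinates $\xi_1 = \xi/2 + \omega$, $\xi_2 = \xi/2 - \omega$ (and likewise the primed variables) collapses these constraints to $\xi = \xi'$ and $|\omega| = |\omega'| = r$, so $\omega$ and $\omega'$ range over a common sphere of radius $r$. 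Applying Cauchy--Schwarz on this sphere and then reverting to the variables $(\zeta,\eta) = (\xi_1,\xi_2)$ delivers \eqref{e:Bourgainversion} with constant $2^{-2\beta}\mathbf{C}(d)$; sharpness is confirmed by testing a centred Gaussian.

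For the equality case, Cauchy--Schwarz is an equality precisely when $\widehat{u}_0(\xi/2+r\theta)\widehat{v}_0(\xi/2-r\theta)$ is independent of $\theta \in S^{d-1}$ for a.e.\ $(\xi,r)$. Reparametrising via $\zeta = \xi/2+r\theta$, $\eta = \xi/2-r\theta$, this is the statement that $\widehat{u}_0(\zeta)\widehat{v}_0(\eta)$ depends only on $\zeta+\eta$ and $|\zeta|^2+|\eta|^2$, i.e.\ the Maxwell--Boltzmann functional equation recalled after Theorem~\ref{t:OTCunify}. Its integrable solutions necessarily take the form $\widehat{u}_0(\zeta) = \exp(a|\zeta|^2 + b \cdot \zeta + c_1)$ and $\widehat{v}_0(\eta) = \exp(a|\eta|^2 + b \cdot \eta + c_2)$ with \emph{common} coefficients $a \in \mathbb{C}$, $\mathrm{Re}(a) < 0$, and $b \in \mathbb{C}^d$, so that $\widehat{u}_0 \in \mathfrak{G}$ and $\widehat{v}_0$ is a scalar multiple of $\widehat{u}_0$, equivalently $v_0$ is a scalar multiple of $u_0$.

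I expect the main obstacle to be the careful bookkeeping of the Jacobians and dimensional constants through the $\delta$-function manipulations, together with the rigorous deduction from Maxwell--Boltzmann that the two Gaussians must share the same quadratic and linear coefficients. Both points are carried out for the $\beta = 0$ case in \cite{Carneiro} and transfer to general $\beta > \tfrac{1-d}{2}$ without structural change, since the extra multiplicative weight $2^{-\beta}|\xi_1-\xi_2|^{2\beta}$ does not alter the geometry of the resonance set.
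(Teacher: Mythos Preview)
Your approach is correct and essentially the same as the paper's. The paper phrases the key step as the arithmetic--geometric mean inequality $2\,\mathrm{Re}\bigl(\widehat{U}_0(\zeta)\overline{\widehat{U}_0(\eta)}\bigr) \leq |\widehat{U}_0(\zeta)|^2 + |\widehat{U}_0(\eta)|^2$ integrated against a symmetric measure $\mathrm{d}\Sigma_\zeta(\eta)\,\mathrm{d}\zeta$, whereas you use Cauchy--Schwarz on the sphere in centre-of-mass coordinates; these are equivalent formulations of the same inequality, and your centre-of-mass change of variables is exactly what the paper uses to compute the mass $\int \mathrm{d}\Sigma_\zeta$ (Lemma~\ref{l:convitself}). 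The paper's AM--GM packaging has the side benefit of unifying the $uv$ and $u\overline{v}$ arguments and directly yielding the heat-flow monotonicity of Theorem~\ref{t:flow}, but for the estimate and equality case alone your route is equally efficient.

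One point deserves more care than a citation to the $\beta=0$ argument in \cite{Carneiro}: passing from the Maxwell--Boltzmann functional equation to the gaussian conclusion requires the solution to be integrable, and for general $\beta$ the hypothesis $I_\beta(u_0,v_0)<\infty$ does not directly place $\widehat{u}_0$ in $L^1$. The paper treats this by cases on the sign of the kernel exponent $4\beta+d-2$: when it is negative, one multiplies $\widehat{u}_0$ by a gaussian factor and uses Cauchy--Schwarz to obtain an $L^1$ solution of the MB equation; when it is nonnegative, one applies a reverse Hardy--Littlewood--Sobolev inequality to get $|\widehat{u}_0|^p \in L^1$ for some $p\in(0,1]$, which again solves MB. You correctly flagged this as the main obstacle, but be aware that the argument genuinely splits into these two regimes rather than transferring verbatim from $\beta=0$.
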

\begin{corollary} \label{c:CtrickBspace}
Let $d \geq 2$. Then
\begin{equation} \label{e:CBourgainversion}
\| \, |\mathfrak{D}|^{\frac{2-d}{4}} (u^2) \|_{L^2}^2 \leq  \frac{2^{\frac{2-3d}{2}}\pi^{\frac{2-d}{2}} }{\Gamma(\frac{d}{2})}  \|u_0\|_{L^2}^4
\end{equation}
and
\begin{equation} \label{e:CtrickBourgainversion}
\| \, |\mathfrak{D}|^{\frac{4-d}{4}} (u^2) \|_{L^2}^2 \leq  \frac{2^{\frac{2-3d}{2}}\pi^{\frac{2-d}{2}} }{\Gamma(\frac{d}{2})}  \|u_0\|_{\dot{H}^1}^2\|u_0\|_{L^2}^2
\end{equation}
for solutions $u$ of \eqref{e:Schro} with initial data $u_0 \in L^2$ and $u_0 \in H^1$, respectively, and in each case, the constant is optimal. Initial data $u_0$ is an extremiser for \eqref{e:CBourgainversion} if and only if $\widehat{u}_0 \in \mathfrak{G}$, and $u_0$ is an extremiser for \eqref{e:CtrickBourgainversion} if and only if $\widehat{u}_0 \in \mathfrak{G}_r$.
\end{corollary}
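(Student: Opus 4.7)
The plan is to deduce both estimates directly from Theorem~\ref{t:Bourgainspaceversion} by specialising to $v=u$ (hence $v_0 = u_0$) and choosing $\beta$ so that the kernel $|\zeta-\eta|^{4\beta+d-2}$ reduces to something simple. For \eqref{e:CBourgainversion} we take $\beta = \frac{2-d}{4}$, so that $4\beta + d - 2 = 0$ and the right-hand side of \eqref{e:Bourgainversion} collapses to the product
\begin{equation*}
2^{-2\beta}\mathbf{C}(d)\,\|\widehat{u}_0\|_{L^2}^{4} = 2^{-2\beta}\mathbf{C}(d)\,(2\pi)^{2d}\|u_0\|_{L^2}^{4},
\end{equation*}
after Plancherel (with the convention stated in the paper, $\|\widehat{f}\|_{L^2}^2 = (2\pi)^d\|f\|_{L^2}^2$). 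A routine arithmetic check, plugging in the explicit $\mathbf{C}(d) = 2^{2-4d}\pi^{(2-5d)/2}/\Gamma(d/2)$, produces exactly the constant $2^{(2-3d)/2}\pi^{(2-d)/2}/\Gamma(d/2)$. Sharpness and the extremiser characterisation follow immediately from Theorem~\ref{t:Bourgainspaceversion}: the condition ``$v_0$ is a scalar multiple of $u_0$'' is automatic here, so equality is equivalent to $\widehat{u}_0 \in \mathfrak{G}$.

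For \eqref{e:CtrickBourgainversion} we take $\beta = \frac{4-d}{4}$, which gives the kernel $|\zeta-\eta|^2$. The key step is the Carneiro-type expansion
\begin{equation*}
|\zeta-\eta|^2 = |\zeta|^2 + |\eta|^2 - 2\zeta\cdot\eta,
\end{equation*}
inserted into the right-hand side of \eqref{e:Bourgainversion} with $v_0=u_0$. By Plancherel the $|\zeta|^2$ and $|\eta|^2$ contributions together equal $2(2\pi)^{2d}\|u_0\|_{L^2}^{2}\|u_0\|_{\dot{H}^{1}}^{2}$, while the cross term factorises as $-2|V|^{2}$ with
\begin{equation*}
V := \int_{\mathbb{R}^d} \zeta\,|\widehat{u}_0(\zeta)|^2\,\mathrm{d}\zeta \in \mathbb{R}^d.
\end{equation*}
Since $-2|V|^2 \leq 0$, dropping this non-positive term yields the desired inequality, and the same arithmetic as above (using $2^{-2\beta} = 2^{(d-4)/2}$) produces the claimed constant.

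For the extremiser characterisation of \eqref{e:CtrickBourgainversion}, equality forces \emph{both} equality in Theorem~\ref{t:Bourgainspaceversion}, i.e.\ $\widehat{u}_0\in\mathfrak{G}$, \emph{and} vanishing of the cross term, i.e.\ $V=0$. Writing $\widehat{u}_0(\zeta) = \exp(a|\zeta|^2 + b\cdot\zeta + c)$, the density $|\widehat{u}_0|^2$ is a gaussian on $\mathbb{R}^d$ centred at $-\mathrm{Re}(b)/(2\mathrm{Re}(a))$, so $V=0$ is equivalent to $\mathrm{Re}(b)=0$, i.e.\ $b\in i\mathbb{R}^d$; this is precisely the defining condition for $\mathfrak{G}_r$, and explains the narrower extremiser class in the second estimate. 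The only non-routine ingredient in the whole argument is the sign analysis of the cross term, which is the direct analogue of the trick used for Corollary~\ref{c:Ctrick}; everything else is bookkeeping of constants.
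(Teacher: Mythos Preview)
Your proof is correct and follows essentially the same approach as the paper: both estimates are obtained from Theorem~\ref{t:Bourgainspaceversion} with $v_0=u_0$ at $\beta=\tfrac{2-d}{4}$ and $\beta=\tfrac{4-d}{4}$, the latter via the expansion of $|\zeta-\eta|^2$ and the observation that the cross term is non-positive. Your treatment of the extremisers for \eqref{e:CtrickBourgainversion} via the factorisation of the cross term as $-2|V|^2$ and the identification of $V$ with the gaussian mean is slightly slicker than the paper's direct integral computation, but the underlying idea is the same.
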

It is interesting to contrast our observations with the case of the half-wave propagator $e^{it\sqrt{-\Delta}}$ where the situation is somewhat different. Sharp space-time estimates which are analogous to those in Theorems \ref{t:OTCunify} and \ref{t:Bourgainspaceversion} have very recently been obtained in \cite{BJO}, in which case the class of extremisers is the same for both $u\overline{v}$ and $uv$ and in each case the multiplier operator is a power of $|\square| = |\partial_t^2 - \Delta|$.

It is possible to prove \eqref{e:OT''} by modifying to the approach of Ozawa--Tsutsumi in \cite{OT}, and similarly, one can prove \eqref{e:Bourgainversion} by appropriately modifying the approach of Foschi in \cite{Foschi}; these approaches are rather different. Here, our proofs of \eqref{e:OT''} and \eqref{e:Bourgainversion} are based on the alternative perspective in \cite{BBI}, which has the main advantage of being simultaneously applicable to \eqref{e:OT''} and \eqref{e:Bourgainversion}, thus permitting a streamlined presentation. A consequence of this is that the characterisation of extremisers in both Theorems \ref{t:OTCunify} and \ref{t:Bourgainspaceversion} may be reduced immediately to finding the solution of the same functional equation. Furthermore, by using the approach based on \cite{BBI} we are able to expose underlying heat-flow monotonicity phenomena in the general context of \eqref{e:OT''} and \eqref{e:Bourgainversion}, extending some of the results in \cite{BBI}. In particular, we shall prove the following.
\begin{theorem} \label{t:flow}
Suppose $d \geq 2$. For any $\sigma > \frac{1-d}{4}$ and initial data $(u_0,v_0) \in \Upsilon_\sigma$, the quantity
\begin{equation*}
\rho \mapsto \mathbf{OT}(d,\sigma) I_\sigma(e^{\rho\Delta}u_0,e^{\rho\Delta}v_0) - \|(-\Delta)^{\sigma}(e^{\rho\Delta}u\,\overline{e^{\rho\Delta}v}) \|_{L^2_{t,x}}^2
\end{equation*}
is nonincreasing on $(0,\infty)$. Similarly, for any $\beta > \frac{1-d}{2}$ and $(u_0,v_0) \in \Upsilon_\beta$, the quantity
\begin{equation*}
\rho\mapsto 2^{-2\beta} \mathbf{C}(d) I_\beta(e^{\rho\Delta}u_0,e^{\rho\Delta}v_0) - \|\, |\mathfrak{D}|^\beta (e^{\rho\Delta}u \,e^{\rho\Delta}v) \|_{L^2_{t,x}}^2
\end{equation*}
is nonincreasing on $(0,\infty)$.
\end{theorem}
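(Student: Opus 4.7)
The plan is to pass to the Fourier side in both space and time and recast the defect
\[
\Phi(\rho) := \mathbf{OT}(d,\sigma)\, I_\sigma(e^{\rho\Delta}u_0, e^{\rho\Delta}v_0) - \bigl\|(-\Delta)^\sigma\bigl(e^{\rho\Delta}u\,\overline{e^{\rho\Delta}v}\bigr)\bigr\|_{L^2_{t,x}}^2
\]
as a single $4$-fold frequency integral whose $\rho$-dependence is isolated in a common Gaussian weight. Writing $F := \widehat{u_0}$ and $G := \widehat{v_0}$, the first term is immediately
\[
\mathbf{OT}(d,\sigma)\int_{\R^{2d}} |F(\zeta)|^2|G(\eta)|^2\,|\zeta-\eta|^{4\sigma+d-2}\, e^{-2\rho(|\zeta|^2+|\eta|^2)}\,\mathrm{d}\zeta\,\mathrm{d}\eta.
\]
For the $L^2_{t,x}$ term I would compute the spatial Fourier transform of $e^{\rho\Delta}u(t,\cdot)\,\overline{e^{\rho\Delta}v(t,\cdot)}$ as a convolution, insert the Schr\"odinger phases $e^{\pm it|\cdot|^2}$, apply Plancherel in $x$ and integrate in $t$; the last step produces a $\delta$-function enforcing energy conservation. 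The outcome is an integral over $(\zeta_1, \eta_1, \zeta_2, \eta_2) \in \R^{4d}$ constrained by momentum conservation $\zeta_1-\eta_1 = \zeta_2-\eta_2$ and energy conservation $|\zeta_1|^2-|\eta_1|^2 = |\zeta_2|^2-|\eta_2|^2$, weighted by $|\zeta_1-\eta_1|^{4\sigma}$, the common Gaussian $e^{-\rho P}$ with $P := |\zeta_1|^2+|\eta_1|^2+|\zeta_2|^2+|\eta_2|^2$, and the product $F(\zeta_1)\overline{F(\zeta_2)}\,\overline{G(\eta_1)}G(\eta_2)$; the $I_\sigma$ term is lifted to the same variables via the diagonal deltas $\delta(\zeta_1-\zeta_2)\delta(\eta_1-\eta_2)$.

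Since $\rho$ enters both pieces only through $e^{-\rho P}$, differentiation yields
\[
-\Phi'(\rho) = \int_{\R^{4d}} P\,\bigl(K_A - K_B\bigr)\,e^{-\rho P}\,F(\zeta_1)\overline{F(\zeta_2)}\,\overline{G(\eta_1)}G(\eta_2)\,\mathrm{d}\zeta_1\,\mathrm{d}\eta_1\,\mathrm{d}\zeta_2\,\mathrm{d}\eta_2,
\]
where $K_A, K_B$ are the two kernels identified above. The key step is to show $-\Phi'(\rho)\geq 0$. Using the splitting $P = (|\zeta_1|^2+|\zeta_2|^2) + (|\eta_1|^2+|\eta_2|^2)$ and recognising multiplication by $|\cdot|^2$ on the Fourier side as the symbol of $-\Delta$, one can re-express $-\Phi'(\rho)$ as the sum of two defect quantities of the same structure as $\Phi$, now for the modified pairs $((-\Delta)^{1/2}e^{\rho\Delta}u_0, e^{\rho\Delta}v_0)$ and $(e^{\rho\Delta}u_0, (-\Delta)^{1/2}e^{\rho\Delta}v_0)$. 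To avoid circularity with the statement of Theorem \ref{t:OTCunify}, one should argue this nonnegativity directly via a pointwise Cauchy--Schwarz computation on the $(d-1)$-dimensional fibre of the momentum--energy constraint surface. This fibre is easily parametrised: for fixed $(\zeta_1, \eta_1)$, setting $\mu := \zeta_1-\eta_1$, the constraints force $(\zeta_2, \eta_2) = (\zeta_1 + \mathbf{w}, \eta_1 + \mathbf{w})$ with $\mathbf{w}\in \mu^\perp \cong \R^{d-1}$, and one runs the Cauchy--Schwarz argument of \cite{BBI} over this translation fibre against the weight coming from $K_A$.

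The main obstacle will be executing this Cauchy--Schwarz step rigorously: on the $B$-constraint surface one has $P = 2(|\zeta_1|^2+|\eta_2|^2) = 2(|\zeta_2|^2+|\eta_1|^2)$ (so $P/2$ is the ``transfer energy'' of the collision) and this factor varies over the $\mathbf{w}$-fibre, so careful bookkeeping of the fibre measure against the diagonal $A$-kernel is required. The second statement of Theorem \ref{t:flow}, involving $|\mathfrak{D}|^\beta(uv)$ in place of $(-\Delta)^\sigma(u\overline{v})$, is handled by exactly the same plan: the absence of the complex conjugate switches the constraints to the ``summed'' form $\zeta_1+\eta_1 = \zeta_2+\eta_2$ and $|\zeta_1|^2+|\eta_1|^2 = |\zeta_2|^2+|\eta_2|^2$, and the identity $\tau + \tfrac{1}{2}|\xi|^2 = -\tfrac{1}{2}|\zeta_1-\eta_1|^2$ on the support of $\widehat{uv}$ shows that the symbol $|\tau+\tfrac{1}{2}|\xi|^2|^{2\beta}$ reduces to $2^{-2\beta}|\zeta_1-\eta_1|^{4\beta}$, accounting for the constant $2^{-2\beta}\mathbf{C}(d)$. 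The Gaussian factor $e^{-\rho P}$ is unchanged, so the monotonicity argument transplants directly.
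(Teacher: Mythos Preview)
Your plan contains a real gap. The proposed splitting of $-\Phi'(\rho)$ into ``two defect quantities of the same structure as $\Phi$, now for the modified pairs $((-\Delta)^{1/2}e^{\rho\Delta}u_0, e^{\rho\Delta}v_0)$ and $(e^{\rho\Delta}u_0, (-\Delta)^{1/2}e^{\rho\Delta}v_0)$'' does not work as stated: the defect for the first modified pair would carry the weight $|\zeta_1||\zeta_2|$ (one factor of $|\cdot|$ on each Fourier copy of $u_0$), not $|\zeta_1|^2+|\zeta_2|^2$. So the $K_B$-part of your differentiated expression does not reassemble into a squared $L^2$ norm for modified data. Your fallback---a direct Cauchy--Schwarz over the fibre---then runs into precisely the obstacle you flag yourself: in your parametrisation (fix $(\zeta_1,\eta_1)$, vary $\mathbf w\in\mu^\perp$) the weight $P$ genuinely varies along the fibre, and you do not indicate how to close the argument against the diagonal kernel.

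The paper avoids all of this by not differentiating first. The proof of \eqref{e:OT''} already shows, via the arithmetic--geometric mean step, the \emph{identity}
\[
\mathbf{OT}(d,\sigma)\, I_\sigma(u_0,v_0) - \|(-\Delta)^\sigma(u\overline v)\|_{L^2}^2
= c\int_{\R^{2d}}\int_{\R^{2d}}\bigl|\widehat{U}_0(\zeta)-\widehat{U}_0(\eta)\bigr|^2\,\mathrm d\Sigma_\zeta(\eta)\,\mathrm d\zeta,
\]
with $U_0=u_0\otimes v_0(-\cdot)$. Now replace $(u_0,v_0)$ by $(e^{\rho\Delta}u_0,e^{\rho\Delta}v_0)$: this multiplies $\widehat{U}_0(\zeta)$ by $e^{-\rho|\zeta|^2}$ and $\widehat{U}_0(\eta)$ by $e^{-\rho|\eta|^2}$. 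The crucial point---which your fibre parametrisation obscures---is that on the support of $\mathrm d\Sigma_\zeta$ the energy constraint reads $|\zeta|^2=|\eta|^2$, so these two Gaussian factors are \emph{equal} and pull out of the squared difference:
\[
\Phi(\rho)=c\int_{\R^{2d}}\int_{\R^{2d}} e^{-2\rho|\zeta|^2}\bigl|\widehat{U}_0(\zeta)-\widehat{U}_0(\eta)\bigr|^2\,\mathrm d\Sigma_\zeta(\eta)\,\mathrm d\zeta.
\]
This is a nonnegative integrand times a factor monotone decreasing in $\rho$, so $\Phi$ is nonincreasing (indeed completely monotone). The $|\mathfrak D|^\beta$ case is identical with the measure \eqref{e:measureforBversion}. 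In short, the step you are missing is to apply the AM--GM/square-completion to $\Phi(\rho)$ itself, not to its derivative, and to parametrise the constraint surface by $\zeta\in\R^{2d}$ (so that the conserved energy makes the heat-flow weight constant on each fibre).
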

The monotonicity of the quantities in Theorem \ref{t:flow} for $\rho > 0$ recovers the sharp estimates in \eqref{e:OT''} and \eqref{e:Bourgainversion}, respectively, and this is seen by comparing the limiting behaviour of the quantities as $\rho \to 0+$ and $\rho \to \infty$. Thus, the functionals are interpolating between the two sides of the inequalities and the heat-flow is evolving arbitrary initial data to a Gaussian shape for large (heat-flow) time $\rho$; we refer the reader to \cite{BBI} for background and further details on this perspective.

\textit{Organisation.} In the next section we prove the sharp estimates appearing in Theorems \ref{t:OTCunify} and \ref{t:Bourgainspaceversion} and Corollaries \ref{c:Ctrick} and \ref{c:CtrickBspace}, along with the heat-flow monotonicity in Theorem \ref{t:flow}. The statements concerning characterisations of extremisers in these results are proved in Section \ref{section:ex}. Finally, in Section \ref{section:further} we discuss the case $d=1$ in order to clarify the relationship between Theorems \ref{t:OTCunify} and \ref{t:Bourgainspaceversion}, and we provide a proof of Theorem \ref{t:nobars}.

\section{Proof of the sharp estimates \eqref{e:OT''}--\eqref{e:CtrickBourgainversion}} \label{section:estimate}

For appropriate functions $F$ on $\mathbb{R}^{1+d}$, we will use the notation $\widetilde{F}$ for the space-time Fourier transform of $F$ given by
\[
\widetilde{F}(\tau,\xi) = \int_{\mathbb{R}^{1+d}} F(t,x) \exp(-i(t\tau + x \cdot \xi))  \, \mathrm{d}t \mathrm{d}x\,.
\]

\begin{proof}[Proof of \eqref{e:OT''}]
Since the argument for $\sigma = \frac{2-d}{4}$ may be found in \cite{BBI} and we only need make straightforward modifications to handle general $\sigma$, we shall be brief in certain parts of the argument.

An application of Plancherel's theorem in space-time gives
\begin{equation*}
\| (-\Delta)^{\sigma} (u \overline{v}) \|_{L^2}^2 = \frac{1}{(2\pi)^{d+1}} \int_{\mathbb{R}^{d+1}} |\xi|^{4\sigma} |\widetilde{(u\overline{v})}(\tau,\xi)|^2 \, \mathrm{d}\xi \mathrm{d}\tau
\end{equation*}
and since $\widetilde{u\overline{v}}  = \frac{1}{(2\pi)^{d+1}} \,\widetilde{u} * \widetilde{\overline{v}}$ we obtain
\begin{align*}
&  \| (-\Delta)^{\sigma} (u \overline{v}) \|_{L^2}^2 \\
& =  \frac{1}{(2\pi)^{3d-1}} \int_{\mathbb{R}^{2d}} \int_{\mathbb{R}^{2d}} |\zeta_1 + \zeta_2|^{4\sigma} \widehat{u}_0(\zeta_1) \widehat{\overline{v}}_0(\zeta_2)  \overline{\widehat{u}_0(\eta_1) \widehat{\overline{v}}_0(\eta_2) }  \,\, \times \\
& \qquad  \qquad \delta(-|\zeta_1|^2 + |\zeta_2|^2 + |\eta_1|^2-|\eta_2|^2) \delta(\zeta_1 + \zeta_2 - \eta_1 -\eta_2) \,\mathrm{d}\zeta\mathrm{d}\eta\,.
\end{align*}
Relabelling the variables $(\zeta_1,\eta_1,\zeta_2,\eta_2) \to (\zeta_1,\eta_1,\eta_2,\zeta_2)$, we have
\begin{align} \label{e:preAMGM}
\| (-\Delta)^{\sigma} (u \overline{v}) \|_{L^2}^2 = \frac{1}{(2\pi)^{3d-1}} \int_{\mathbb{R}^{2d}} \int_{\mathbb{R}^{2d}}   \widehat{U}_0(\zeta) \overline{\widehat{U}_0(\eta)} \, \mathrm{d}\Sigma_\zeta(\eta) \mathrm{d}\zeta
\end{align}
where $U_0 = u_0 \otimes v_0(- \,\cdot)$ and the measure $\mathrm{d}\Sigma_\zeta(\eta)$ is given by
\begin{equation} \label{e:measureforOTversion}
\mathrm{d}\Sigma_\zeta(\eta)  =  |\zeta_1 + \eta_2|^{4\sigma} \delta(|\eta_1|^2 + |\eta_2|^2-|\zeta_1|^2-|\zeta_2|^2) \delta(\eta_1  -\eta_2 - (\zeta_1 - \zeta_2))  \mathrm{d}\eta\,.
\end{equation}
\begin{lemma} \label{l:OTviaFoschi}
For each $\zeta \in \mathbb{R}^{2d}$ we have
\begin{equation*}
\int_{\mathbb{R}^{2d}} \mathrm{d}\Sigma_\zeta  = \pi^{\frac{d-1}{2}} \frac{\Gamma(2\sigma + \frac{d-1}{2})}{2\Gamma(2\sigma + d-1)} |\zeta_1 + \zeta_2|^{4\sigma + d-2}\,.
\end{equation*}
\end{lemma}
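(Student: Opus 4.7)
The plan is to compute the integral by using the two delta functions in $\mathrm{d}\Sigma_\zeta$ to reduce to a single surface integral on $S^{d-1}$, and then evaluate the angular integral in closed form via a Beta function identity.

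First I would use the linear delta $\delta(\eta_1 - \eta_2 - (\zeta_1-\zeta_2))$ to integrate out $\eta_1$, setting $\eta_1 = \eta_2 + \zeta_1 - \zeta_2$. Expanding the remaining quadratic delta gives
\[
|\eta_1|^2 + |\eta_2|^2 - |\zeta_1|^2 - |\zeta_2|^2 \;=\; 2|\eta_2|^2 + 2\eta_2 \cdot (\zeta_1 - \zeta_2) - 2\zeta_1 \cdot \zeta_2,
\]
which after the recentring $\omega := \eta_2 + \tfrac{1}{2}(\zeta_1 - \zeta_2)$ becomes $2|\omega|^2 - \tfrac{1}{2}|\zeta_1+\zeta_2|^2$. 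The key point is that the same substitution turns the weight nicely: $\zeta_1 + \eta_2 = \omega + \tfrac{1}{2}(\zeta_1+\zeta_2)$, so the full integrand depends on $\omega$ and on the single vector $\tfrac{1}{2}(\zeta_1+\zeta_2)$.

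Next I would pass to spherical coordinates $\omega = r\theta$ with $r \geq 0$ and $\theta \in S^{d-1}$, and write $R := |\zeta_1+\zeta_2|$, $e := R^{-1}(\zeta_1+\zeta_2)$. The radial delta $\delta(2r^2 - R^2/2)$ localises $r$ at $R/2$ with Jacobian $1/(2R)$, reducing the problem to
\[
\int_{\mathbb{R}^{2d}} \mathrm{d}\Sigma_\zeta \;=\; \frac{R^{4\sigma+d-2}}{2^{2\sigma+d}} \int_{S^{d-1}} (1 + \theta\cdot e)^{2\sigma} \, \mathrm{d}\theta,
\]
after using $|\tfrac{R}{2}\theta + \tfrac{R}{2}e|^{4\sigma} = 2^{2\sigma}(R/2)^{4\sigma}(1+\theta\cdot e)^{2\sigma}$.

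Finally, I would evaluate the angular integral by rotational symmetry, writing $\theta\cdot e = \cos\phi$ and using $|S^{d-2}| = 2\pi^{(d-1)/2}/\Gamma(\tfrac{d-1}{2})$:
\[
\int_{S^{d-1}} (1+\theta\cdot e)^{2\sigma} \mathrm{d}\theta \;=\; \frac{2\pi^{\frac{d-1}{2}}}{\Gamma(\frac{d-1}{2})} \int_{-1}^{1} (1+s)^{2\sigma + \frac{d-3}{2}} (1-s)^{\frac{d-3}{2}} \mathrm{d}s.
\]
Substituting $u = (1+s)/2$ recognises the inner integral as a Beta function $B(2\sigma + \tfrac{d-1}{2}, \tfrac{d-1}{2})$ up to a factor of $2^{2\sigma + d - 2}$; note that the hypothesis $\sigma > \tfrac{1-d}{4}$ is precisely what is needed for convergence at $s=-1$. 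Collecting all the factors of $2$ and $\pi$, the $\Gamma(\tfrac{d-1}{2})$ cancels and the claimed identity falls out. The only mildly delicate step is the careful bookkeeping of powers of $2$ through the change of variables and the Beta integral; this is where most errors would occur, but it is purely a bookkeeping issue rather than a conceptual obstacle.
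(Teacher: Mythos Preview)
Your proof is correct and follows essentially the same route as the paper: use the linear delta to reduce to a single $\mathbb{R}^d$ integral, pass to polar coordinates, and evaluate the resulting angular integral as a Beta function. The only cosmetic difference is the choice of shift---the paper takes $\xi_2 = \eta_2 + \zeta_1$, which makes the weight $|\xi_2|^{4\sigma}$ and leaves the constraint as the sphere $|\xi_2|^2 = \xi_2\cdot(\zeta_1+\zeta_2)$, whereas you recentre at $\tfrac{1}{2}(\zeta_1-\zeta_2)$ so the constraint becomes $|\omega|=R/2$ but the weight becomes $|\omega + \tfrac{1}{2}(\zeta_1+\zeta_2)|^{4\sigma}$; both lead to the same Beta integral after one more substitution.
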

\begin{proof}
We have
\begin{align*}
\int_{\mathbb{R}^{2d}} \mathrm{d}\Sigma_\zeta(\eta)  & = \frac{1}{2} \int_{\mathbb{R}^{d}}  |\xi_2|^{4\sigma} \delta(|\xi_2|^2 - \xi_2 \cdot (\zeta_1 + \zeta_2)) \mathrm{d}\xi_2 \\
& = \frac{1}{2} \int_{\mathbb{S}^{d-1}} \int_0^\infty  r^{4\sigma + d-2} \delta(r - \omega \cdot (\zeta_1 + \zeta_2)) \mathrm{d}r \mathrm{d}\omega
\end{align*}
via the change of variables $(\xi_1,\xi_2) = (\eta_1+\zeta_2,\eta_2+\zeta_1)$ and subsequently polar coordinates $\xi_2 = r \omega$. By applying a rotation, we may replace $\zeta_1 + \zeta_2$ with $|\zeta_1 + \zeta_2| \mathbf{e}_1$, and thus (via, for example, the Funk--Hecke formula; see \cite{AtkinsonHan})
\begin{align*}
\int_{\mathbb{R}^{2d}} \mathrm{d}\Sigma_\zeta(\eta)  = \frac{\pi^\frac{d-1}{2}}{\Gamma(\frac{d-1}{2})}|\zeta_1 + \zeta_2|^{4\sigma+d-2} \int_{0}^1  s^{4\sigma + d-2} (1-s^2)^{\frac{d-3}{2}} \mathrm{d}s\,.
\end{align*}
To obtain the claimed expression for the constant we change variables once more
\begin{align*}
\int_{0}^1  s^{4\sigma + d-2} (1-s^2)^{\frac{d-3}{2}} \mathrm{d}s = \frac{1}{2} \int_{0}^1  t^{2\sigma + \frac{d-3}{2}} (1-t)^{\frac{d-3}{2}} \mathrm{d}t = \frac{1}{2} \mathrm{B}(\tfrac{d-1}{2},2\sigma + \tfrac{d-1}{2})\,,
\end{align*}
where $\mathrm{B}$ is the beta function. An application of the identity $\mathrm{B}(x,y) = \frac{\Gamma(x)\Gamma(y)}{\Gamma(x+y)}$
completes the proof.
\end{proof}
Lemma \ref{l:OTviaFoschi} and the symmetry relation $\mathrm{d}\Sigma_\eta(\zeta)\mathrm{d}\eta = \mathrm{d}\Sigma_\zeta(\eta)\mathrm{d}\zeta$ imply that
\begin{align*}
\mathbf{OT}(d,\sigma) I_\sigma(u_0,v_0) & = \frac{1}{(2\pi)^{3d-1}} \int_{\mathbb{R}^{2d}}\int_{\mathbb{R}^{2d}} |\widehat{U}_0(\zeta)|^2   \, \mathrm{d}\Sigma_\zeta(\eta) \mathrm{d}\zeta \\
& = \frac{1}{2(2\pi)^{3d-1}} \int_{\mathbb{R}^{2d}}\int_{\mathbb{R}^{2d}}\big( |\widehat{U}_0(\zeta)|^2 + |\widehat{U}_0(\eta)|^2 \big)   \, \mathrm{d}\Sigma_\zeta(\eta) \mathrm{d}\zeta\,.
\end{align*}
Since the left-hand side of \eqref{e:preAMGM} is nonnegative, we may take the real part of both sides and apply the arithmetic--geometric mean inequality
\[
2\mbox{Re} \,\big(\widehat{U}_0(\zeta) \overline{\widehat{U}_0(\eta)} \big)\leq |\widehat{U}_0(\zeta)|^2 + |\widehat{U}_0(\eta)|^2
\]
to obtain
\[
\| (-\Delta)^{\sigma} (u \overline{v}) \|_{L^2}^2 \leq \mathbf{OT}(d,\sigma) I_\sigma(u_0,v_0)
\]
which establishes \eqref{e:OT''}.
\end{proof}

\begin{proof}[Proof of \eqref{e:Bourgainversion}]
Writing $\widetilde{uv} = \frac{1}{(2\pi)^{d+1}} \,\widetilde{u} * \widetilde{v}$ leads to
\begin{align*}
& ||\, |\mathfrak{D}|^\beta(uv)\|_{L^2}^2 \\
& =  2^{-3d+1-2\beta} \pi^{-1-3d} \int_{\mathbb{R}^{2d}} \int_{\mathbb{R}^{2d}} |\zeta_1 - \zeta_2|^{4\beta} \widehat{u}_0(\zeta_1) \widehat{v}_0(\zeta_2) \overline{\widehat{u}_0(\eta_1) \widehat{v}_0(\eta_2)}   \,\, \times \\
& \qquad  \qquad \delta(|\zeta_1|^2 + |\zeta_2|^2 - |\eta_1|^2-|\eta_2|^2) \delta(\zeta_1 + \zeta_2 - \eta_1 -\eta_2) \,\mathrm{d}\zeta\mathrm{d}\eta \\
& = 2^{-3d+1-2\beta} \pi^{-1-3d} \int_{\mathbb{R}^{2d}} \int_{\mathbb{R}^{2d}}   \widehat{U}_0(\zeta) \overline{\widehat{U}_0(\eta)} \, \mathrm{d}\Sigma_\zeta(\eta) \mathrm{d}\zeta \,.
\end{align*}
Here, $U_0 = u_0 \otimes v_0$, the measure $\mathrm{d}\Sigma_\zeta(\eta)$ is given by
\begin{equation} \label{e:measureforBversion}
\mathrm{d}\Sigma_\zeta(\eta)  =  |\zeta_1 - \zeta_2|^{4\beta} \delta(|\zeta_1|^2 + |\zeta_2|^2-|\eta_1|^2-|\eta_2|^2) \delta(\zeta_1  + \zeta_2 - \eta_1 - \eta_2)  \mathrm{d}\eta
\end{equation}
and where we have used the fact that if $\tau = -|\zeta_1|^2-|\zeta_2|^2$ and $\xi = \zeta_1 + \zeta_2$, then
\[
|\tau + \tfrac{1}{2}|\xi|^2| = \tfrac{1}{2}|\zeta_1 - \zeta_2|^2.
\]
\begin{remark}
Notice that the function $U_0$ and the measure $\mathrm{d}\Sigma_\zeta$ in the current proof of \eqref{e:Bourgainversion} are slightly different to the $U_0$ and $\mathrm{d}\Sigma_\zeta$ used in the previous proof of \eqref{e:OT''}. We have decided to use the same notation in order to highlight that the two proofs are structurally the same.
\end{remark}
\begin{lemma} \label{l:convitself}
For each $\zeta \in \mathbb{R}^{2d}$ we have
\[
\int_{\mathbb{R}^{2d}} \mathrm{d}\Sigma_\zeta  = \frac{\pi^{\frac{d}{2}}}{2^{d-1}\Gamma(\frac{d}{2})}  |\zeta_1 - \zeta_2|^{4\beta+d-2}.
\]
\end{lemma}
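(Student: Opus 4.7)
The plan is to reduce the integral in $\eta$ to a one-dimensional radial integral in much the same spirit as the proof of Lemma \ref{l:OTviaFoschi}, only here the geometry is slightly cleaner because the weight $|\zeta_1 - \zeta_2|^{4\beta}$ is $\eta$-independent and factors out immediately. So the main task reduces to computing
\[
\mathcal{J}(\zeta) := \int_{\mathbb{R}^{2d}} \delta(|\zeta_1|^2 + |\zeta_2|^2 - |\eta_1|^2 - |\eta_2|^2) \delta(\zeta_1 + \zeta_2 - \eta_1 - \eta_2)\, \mathrm{d}\eta\,.
\]

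First I would use the vector delta to integrate out $\eta_2$, setting $\eta_2 = \zeta_1 + \zeta_2 - \eta_1$. Then I would exploit the natural symmetry of the paraboloid $\{|\zeta_1|^2 + |\zeta_2|^2 : \zeta_1 + \zeta_2 = \text{const.}\}$ by the center-of-mass substitution
\[
\eta_1 = \tfrac{1}{2}(\zeta_1 + \zeta_2) + w, \qquad \eta_2 = \tfrac{1}{2}(\zeta_1 + \zeta_2) - w\,,
\]
under which $|\eta_1|^2 + |\eta_2|^2 = \tfrac{1}{2}|\zeta_1+\zeta_2|^2 + 2|w|^2$ and, analogously, $|\zeta_1|^2+|\zeta_2|^2 = \tfrac{1}{2}|\zeta_1+\zeta_2|^2 + 2|v|^2$ with $v = \tfrac{1}{2}(\zeta_1 - \zeta_2)$. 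Hence the quadratic delta collapses to $\delta(2|v|^2 - 2|w|^2) = \tfrac{1}{2}\delta(|v|^2 - |w|^2)$.

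Passing to polar coordinates $w = r\omega$ and using $\delta(|v|^2 - r^2) = \frac{1}{2|v|}\delta(r-|v|)$ for $r>0$, I obtain
\[
\mathcal{J}(\zeta) = \tfrac{1}{2} \int_{\mathbb{S}^{d-1}} \int_0^\infty r^{d-1} \delta(|v|^2 - r^2)\, \mathrm{d}r \mathrm{d}\omega = \frac{|\mathbb{S}^{d-1}|}{4} |v|^{d-2} = \frac{\pi^{d/2}}{2\Gamma(d/2)}|v|^{d-2}\,.
\]
Substituting $|v|^{d-2} = 2^{2-d}|\zeta_1 - \zeta_2|^{d-2}$ and reinstating the factor $|\zeta_1 - \zeta_2|^{4\beta}$ yields the claimed identity.

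The computation is essentially mechanical; the only place where care is needed is in the substitution, where one must verify that the Jacobian is $1$ (which it is, since $w \mapsto \tfrac{1}{2}(\zeta_1+\zeta_2) + w$ is a translation) and that the reduction of the quadratic delta uses the correct coefficient. Notably, no angular integration of the type encountered in Lemma \ref{l:OTviaFoschi} is needed here, since after the center-of-mass change of variables the remaining delta depends only on $|w|$; this accounts for the shorter and simpler form of the constant compared to the one appearing in Lemma \ref{l:OTviaFoschi}.
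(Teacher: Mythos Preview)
Your proof is correct and follows essentially the same approach as the paper: both arguments use the center-of-mass substitution $\eta_j \mapsto \tfrac{1}{2}(\zeta_1+\zeta_2) \pm w$ to reduce the quadratic delta to a radial constraint, then evaluate the resulting spherical integral. The only cosmetic difference is the order of operations (the paper shifts both $\eta$-variables simultaneously and then uses the vector delta, whereas you integrate out $\eta_2$ first and then shift), but the computation and constants are identical.
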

\begin{proof}
Using the change of variables $(\xi_1,\xi_2) = (\tfrac{1}{2}(\zeta_1+\zeta_2) - \eta_1,\tfrac{1}{2}(\zeta_1+\zeta_2) - \eta_2)$  and a subsequent polar coordinate change of variables in $\xi_2$, we have
\begin{align*}
\int_{\mathbb{R}^{2d}} \mathrm{d}\Sigma_\zeta(\eta) & = |\zeta_1-\zeta_2|^{4\beta} \int_{\mathbb{R}^{2d}} \delta(\tfrac{1}{2}|\zeta_1 - \zeta_2|^2 - |\xi_1|^2 - |\xi_2|^2) \delta(\xi_1 + \xi_2) \, \mathrm{d}\xi \\
& = |\mathbb{S}^{d-1}| |\zeta_1-\zeta_2|^{4\beta} \int_{0}^\infty \delta(\tfrac{1}{2}|\zeta_1 - \zeta_2|^2 - 2r^2) r^{d-1} \, \mathrm{d}r \\
& = \frac{\pi^{\frac{d}{2}}}{2^{d-1}\Gamma(\frac{d}{2})}  |\zeta_1 - \zeta_2|^{4\beta+d-2}.
\end{align*}
In the last step, we used the well-known formula $|\mathbb{S}^{d-1}| = \frac{2\pi^{\frac{d}{2}}}{\Gamma(\frac{d}{2})}$ for the measure of the unit sphere in $\mathbb{R}^d$.
\end{proof}
As in the proof of \eqref{e:OT''}, we now use the symmetry relation $\mathrm{d}\Sigma_\eta(\zeta)\mathrm{d}\eta = \mathrm{d}\Sigma_\zeta(\eta)\mathrm{d}\zeta$, Lemma \ref{l:convitself} and the arithmetic--geometric mean inequality to obtain
\begin{align*}
||\, |\mathfrak{D}|^\beta(uv)\|_{L^2}^2 & =   2^{-3d+1-2\beta} \pi^{-1-3d} \int_{\mathbb{R}^{2d}} \int_{\mathbb{R}^{2d}}   \widehat{U}_0(\zeta) \overline{\widehat{U}_0(\eta)} \, \mathrm{d}\Sigma_\zeta(\eta) \mathrm{d}\zeta \\
& \leq   2^{-3d-2\beta} \pi^{-1-3d} \int_{\mathbb{R}^{2d}}\int_{\mathbb{R}^{2d}}\big( |\widehat{U}_0(\zeta)|^2 + |\widehat{U}_0(\eta)|^2 \big)   \, \mathrm{d}\Sigma_\zeta(\eta) \mathrm{d}\zeta\ \\
& = 2^{-2\beta}\mathbf{C}(d)  I_\beta(u_0,v_0)
\end{align*}
as desired.
\end{proof}

\begin{proof}[Proofs of \eqref{e:Ctrick}, \eqref{e:CBourgainversion} and \eqref{e:CtrickBourgainversion}]
The estimate \eqref{e:CBourgainversion} is an immediate consequence of \eqref{e:Bourgainversion} and Plancherel's theorem.

For \eqref{e:Ctrick} and \eqref{e:CtrickBourgainversion}, expanding $|\zeta - \eta|^2$ and using Plancherel's theorem we obtain
\begin{align*}
I_{\frac{4-d}{4}}(u_0,u_0) = 2(2\pi)^{2d} \|u_0\|_{L^2}^2 \|u_0\|_{\dot{H}^1}^2 - 2 \int_{\mathbb{R}^{2d}}  |\widehat{u}_0(\zeta)|^2|\widehat{u}_0(\eta)|^2 \zeta\cdot\eta \, \mathrm{d}\zeta \mathrm{d}\eta
\end{align*}
and therefore
\begin{equation} \label{e:Ctrickconseq}
I_{\frac{4-d}{4}}(u_0,u_0) \leq 2(2\pi)^{2d} \|u_0\|_{L^2}^2 \|u_0\|_{\dot{H}^1}^2
\end{equation}
for any $u_0 \in H^1$. The estimates \eqref{e:Ctrick} and \eqref{e:CtrickBourgainversion} now follow at once from \eqref{e:OT''} and \eqref{e:Bourgainversion}.
\end{proof}

\begin{proof}[Proof of Theorem \ref{t:flow}]
The above proof of \eqref{e:OT''} in fact shows that
\begin{align*}
\mathbf{OT}(d,\sigma) I_\sigma(u_0,v_0) - \|(-\Delta)^{\sigma}(u\overline{v})\|_{L^2}^2 = c \int_{\mathbb{R}^{2d}}\int_{\mathbb{R}^{2d}}\big|\widehat{U}_0(\zeta) - \widehat{U}_0(\eta)\big|^2   \, \mathrm{d}\Sigma_\zeta(\eta) \mathrm{d}\zeta
\end{align*}
where $\frac{1}{c} = 2(2\pi)^{3d-1}$, $U_0 = u_0 \otimes v_0(- \,\cdot)$ and the measure $\mathrm{d}\Sigma_\zeta(\eta)$ is given by
\eqref{e:measureforOTversion}. Replacing $(u_0,v_0)$ with $(e^{\rho\Delta}u_0,e^{\rho\Delta}v_0)$ for fixed $\rho > 0$, commuting the Schr\"odinger and heat flows, and using the support of $\mathrm{d}\Sigma_\zeta$, we obtain
\begin{align*}
& \mathbf{OT}(d,\sigma) I_\sigma(e^{\rho\Delta}u_0,e^{\rho\Delta}v_0) -\|(-\Delta)^{\sigma}(e^{\rho\Delta}u \, \overline{e^{\rho\Delta}v})\|_{L^2_{t,x}}^2 \\
& \qquad \qquad = c \int_{\mathbb{R}^{2d}}\int_{\mathbb{R}^{2d}} e^{-2\rho(|\zeta_1|^2 + |\zeta_2|^2)} \big| \widehat{U}_0(\zeta) - \widehat{U}_0(\eta)\big|^2   \, \mathrm{d}\Sigma_\zeta(\eta) \mathrm{d}\zeta
\end{align*}
which is manifestly nonincreasing for $\rho \in (0,\infty)$.

A similar argument based on the previous proof of \eqref{e:Bourgainversion} shows that
\begin{align*}
& 2^{-2\beta} \mathbf{C}(d) I_\beta(e^{\rho\Delta}u_0,e^{\rho\Delta}v_0) - \|\, |\mathfrak{D}|^\beta (e^{\rho\Delta}u \,e^{\rho\Delta}v) \|_{L^2_{t,x}}^2 \\
& \qquad \qquad = c \int_{\mathbb{R}^{2d}}\int_{\mathbb{R}^{2d}} e^{-2\rho(|\zeta_1|^2 + |\zeta_2|^2)} \big| \widehat{U}_0(\zeta) - \widehat{U}_0(\eta)\big|^2   \, \mathrm{d}\Sigma_\zeta(\eta) \mathrm{d}\zeta
\end{align*}
where, now, $\frac{1}{c} = 2^{3d+2\beta} \pi^{1+3d} $, $U_0 = u_0 \otimes v_0$ and the measure $\mathrm{d}\Sigma_\zeta(\eta)$ is given by \eqref{e:measureforBversion}. This completes our proof of Theorem \ref{t:flow}.
\end{proof}

\begin{remark}
It is clear from the proof of Theorem \ref{t:flow} that the monotone quantities are in fact \textit{completely monotone} since their $\rho$-derivatives have sign $(-1)^j$ for every $j \in \mathbb{N}$.
\end{remark}

\section{Characterisation of extremising initial data} \label{section:ex}
It was shown in Section \ref{section:estimate} that \eqref{e:OT''} and \eqref{e:Bourgainversion} follow from a single application of the arithmetic--geometric mean inequality
\begin{equation} \label{e:AMGM}
2\mbox{Re} \,\big(\widehat{U}_0(\zeta) \overline{\widehat{U}_0(\eta)} \big)\leq |\widehat{U}_0(\zeta)|^2 + |\widehat{U}_0(\eta)|^2
\end{equation}
for each $\zeta \in \mathbb{R}^{2d}$ and each $\eta$ in the support of $\mathrm{d}\Sigma_\zeta$, which is obviously an equality if and only if $\widehat{U}_0(\zeta)$ and $\widehat{U}_0(\eta)$ coincide. For each estimate \eqref{e:OT''} and \eqref{e:Bourgainversion}, $U_0$ and $\mathrm{d}\Sigma_\zeta$ are slightly different.

For \eqref{e:Bourgainversion}, $U_0 = u_0 \otimes v_0$ and $\mathrm{d}\Sigma_\zeta$ is given by \eqref{e:measureforBversion}, which means $(u_0,v_0)$ is an extremising pair of initial data if and only if
\begin{equation*}
\widehat{u}_0(\zeta_1) \widehat{v}_0(\zeta_2) = \widehat{u}_0(\eta_1) \widehat{v}_0(\eta_2)
\end{equation*}
for almost every $\zeta \in \mathbb{R}^{2d}$ and almost every $\eta \in \mathbb{R}^{2d}$ satisfying $|\eta_1|^2 + |\eta_2|^2 = |\zeta_1|^2 + |\zeta_2|^2$ and $\eta_1 + \eta_2 = \zeta_1 + \zeta_2$, or equivalently
\begin{equation} \label{e:feqnFoschigen}
\widehat{u}_0(\zeta_1) \widehat{v}_0(\zeta_2) = \Lambda(|\zeta_1|^2+|\zeta_2|^2,\zeta_1+\zeta_2)
\end{equation}
for almost every $\zeta \in \mathbb{R}^{2d}$, and where $\Lambda$ is a scalar function. 

For \eqref{e:OT''}, $U_0 = u_0 \otimes v_0(-\, \cdot)$ and $\mathrm{d}\Sigma_\zeta$ is given by \eqref{e:measureforOTversion}. Since
$\widehat{U}_0(\zeta) = \widehat{u}_0(\zeta_1)\widehat{v}_0(-\zeta_2)$ and for $\eta$ in the support of $\mathrm{d}\Sigma_\zeta$ we have $|\eta_1|^2 + |\eta_2|^2 = |\zeta_1|^2 + |\zeta_2|^2$ and $\eta_1 - \eta_2 = \zeta_1 - \zeta_2$, it follows that $(u_0,v_0)$ is an extremising pair of initial data for \eqref{e:OT''} if and only if \eqref{e:feqnFoschigen} holds.


Using the above observations, the characterisation of extremisers for \eqref{e:OT''} and \eqref{e:Bourgainversion} will be established if we can show that, whenever $I_\sigma(u_0,v_0) < \infty$, the pair $(u_0,v_0)$ solves \eqref{e:feqnFoschigen} if and only if $\widehat{u}_0 \in \mathfrak{G}$ and $v_0$ is a scalar multiple of $u_0$. The sufficiency part of this claim is obvious so we show how to justify the necessity part.

\begin{remark} The functional equation 
\begin{equation*} 
F(\zeta_1) F(\zeta_2) = \Lambda(|\zeta_1|^2+|\zeta_2|^2,\zeta_1+\zeta_2)
\end{equation*}
is known in the kinetic equations literature as the Maxwell--Boltzmann (MB) functional equation, and the system of equations
\[
\zeta_1' + \zeta_2' = \zeta_1 + \zeta_2 \qquad \text{and} \qquad |\zeta_1'|^2 + |\zeta_2'|^2 = |\zeta_1|^2 + |\zeta_2|^2
\]
express the conservation of momentum and kinetic energy, respectively, during a binary collision, where $(\zeta_1,\zeta_2)$ are the velocities of a pair of particles before collision, and $(\zeta_1',\zeta_2')$ are the velocities of the same pair after collision. 

It is known that if $F \in L^1(\mathbb{R}^d)$, then $F$ satisfies the MB equation if and only if $F \in \mathfrak{G}$. A justification of this can be found in lecture notes of Villani \cite{Villani} (but the result goes back further; see, for example, work of Lions \cite{Lions} and Perthame \cite{Perthame}). We make use of this below to complete the characterisation of extremisers for \eqref{e:OT''} and \eqref{e:Bourgainversion}.
\end{remark}

\begin{proof}[Extremisers characterisation for \eqref{e:OT''} and \eqref{e:Bourgainversion}] \, The right-hand side of \eqref{e:feqnFoschigen} is symmetric in $\zeta_1$ and $\zeta_2$ and therefore $\widehat{u}_0$ and $\widehat{v}_0$ are linearly dependent; by scaling we may assume $u_0 = v_0$. To show that $\widehat{u}_0 \in \mathfrak{G}$, we argue differently depending on sign of the exponent on the kernel in $I_\sigma(u_0,u_0)$.

First we consider the case $\sigma < \frac{2-d}{4}$ and define
\[
F(\zeta) = e^{-|\zeta|^2} \widehat{u}_0(\zeta).
\]
By the Cauchy--Schwarz inequality on $L^2(\mathbb{R}^{2d})$, we obtain
\begin{align*}
\left( \int_{\mathbb{R}^d}  |F| \right)^4 \leq I_\sigma(u_0,u_0) \int_{\mathbb{R}^d} \int_{\mathbb{R}^d} \frac{e^{-2(|\zeta|^2 + |\eta|^2)}}{|\zeta - \eta|^{4\sigma + d - 2}}  \, \mathrm{d}\zeta \mathrm{d}\eta\,
\end{align*}
and the double integral is finite since $4\sigma + d - 2 < 0$. Thus $F \in L^1(\mathbb{R}^d)$ and clearly inherits the property of being a solution of the MB equation from $\widehat{u}_0$. From the above remark, it follows that $F \in \mathfrak{G}$, and therefore $\widehat{u}_0 \in \mathfrak{G}$.

For $\sigma \geq \frac{2-d}{4}$, we argue somewhat differently, and begin by observing that we may apply the reverse Hardy--Littlewood--Sobolev inequality (see, for example, \cite{DZ}) to obtain
\[
I_{\sigma}(u_0,u_0) \geq C_{d,\sigma} \| \widehat{u}_0 \|_{p}^4
\]
for $p = \frac{4d}{4\sigma + 3d - 2} \in (0,1]$. Therefore, $|\widehat{u}_0|^{p} \in L^1(\mathbb{R}^d)$ is a solution of the MB equation; again using the above remark, it follows that $|\widehat{u}_0|^p \in \mathfrak{G}$, and hence $|\widehat{u}_0| \in \mathfrak{G}$. The polar part $|\widehat{u}_0|^{-1} \widehat{u}_0$ is a solution of the MB equation too; whilst this function is not integrable on $\mathbb{R}^d$, we may argue as above to see that $e^{-|\cdot|^2}|\widehat{u}_0|^{-1} \widehat{u}_0 \in \mathfrak{G}$. It follows that $\widehat{u}_0 \in \mathfrak{G}$, as required.
\end{proof}

\begin{remark}
If we were to know that \emph{locally integrable} solutions of the MB equation must be Gaussian, the above proof would be streamlined (by avoiding the introduction of the Gaussian factors). This is, in fact, true and in this extended remark we include an outline of a proof since it contains some interesting features.

In the case $d=2$, in the characterisation of the extremisers for \eqref{e:FHZd=2}, Foschi \cite{Foschi} showed that, for locally integrable functions, the MB equation admits only Gaussian solutions. This was achieved by first showing such solutions must in fact be continuous; more precisely, Foschi showed the existence of smooth maps $P$ and $Q$ such that 
\begin{equation} \label{e:PQ}
\widehat{u}_0(x)\widehat{u}_0(y) = \widehat{u}_0(P(x,y))\widehat{u}_0(Q(x,y))
\end{equation}
and  
$
\det\frac{\partial P}{\partial y}(x,y), \det\frac{\partial Q}{\partial y}(x,y) \neq 0$, from which the desired continuity follows upon integration in one of the variables. 

The mappings $P$ and $Q$ used by Foschi were given by
\begin{align*}
2P(x,y) =  x+y + H(x-y)  \qquad \text{and} \qquad 2Q(x,y) = x+y - H(x-y) \,,
\end{align*}
where $H(x_1,x_2) = (-x_2,x_1)$. Importantly for the argument, $H$ is smooth, isometric and $H(x) \perp x$. This extends to $\mathbb{R}^d$ when $d$ is \textit{even}, by taking $P, Q$ exactly as above, now with the block-form matrix
\[
H(x_1,x_2,\ldots,x_{d-1},x_d) = (-x_2,x_1,-x_4,x_3,\ldots, -x_d,x_{d-1})\,.
\]
Interestingly, it seems we cannot proceed like this when $d$ is \emph{odd} because of the Hairy Ball theorem from algebraic topology. In particular, it follows (see, for example, \cite{Rotman}) from the Hairy Ball theorem that any continuous map $H$ from an even dimensional sphere to itself cannot have the property that $H(x) \perp x$ for every $x$ (because there must exist some point on the sphere which is fixed or sent to its antipode). So, when $d$ is odd, we cannot find an isometric map $H : \mathbb{R}^d \to \mathbb{R}^d$ which is continuous and is such that $H(x) \perp x$.

Despite this obstruction, we remark that construction of $P$ and $Q$ satisfying \eqref{e:PQ} is possible in all dimensions; specifically, we may take $P(x,y)$ and $Q(x,y)$ to be the two intersection points of the sphere in $\mathbb{R}^d$ with centre $\frac{1}{2}(x+y)$ and radius $\frac{1}{2}|x-y|$ and the straight line passing through the origin and the centre of this sphere. 

Once continuity of $\widehat{u}_0$ is established, one can show that solutions of the MB equation never vanish (for example, by extending Lemma 7.13 in \cite{Foschi} to higher dimensions) after which it becomes much easier to obtain that the solutions must be Gaussian. Indeed, after normalising so that $\widehat{u}_0(0) = 1$, one can use \eqref{e:feqnFoschigen} to show that
\[
\mathrm{e}(\zeta) := \log \widehat{u}_0(\zeta) + \log \widehat{u}_0(-\zeta) \qquad \text{and} \qquad \mathrm{o}(\zeta) := \log \widehat{u}_0(\zeta) - \log \widehat{u}_0(-\zeta),
\]
are \emph{orthogonally additive} functions (that is, additive when restricted to orthogonal vectors) which are even and odd, respectively. Since we have established that these functions are continuous, it is possible to show that we must have $\mathrm{e}(\zeta) = a|\zeta|^2$ and $\mathrm{o}(\zeta) = b \cdot \zeta$, for some $a \in \mathbb{C}$ and $b \in \mathbb{C}^d$, and hence $\widehat{u}_0$ is Gaussian.
\end{remark}

\begin{proof}[Extremisers characterisation for \eqref{e:Ctrick} and \eqref{e:CtrickBourgainversion}]
We saw in Section \ref{section:estimate} that the estimates \eqref{e:Ctrick} and \eqref{e:CtrickBourgainversion} follow from \eqref{e:OT''} and \eqref{e:Bourgainversion}, respectively, followed by \eqref{e:Ctrickconseq}. When $u_0 = v_0$, extremisers of \eqref{e:OT''} and \eqref{e:Bourgainversion} are such that $\widehat{u}_0 \in \mathfrak{G}$, and since
\begin{equation} \label{e:square}
\int_{\mathbb{R}^{2d}} |\widehat{u}_0(\zeta)|^2 |\widehat{u}_0(\eta)|^2 \zeta \cdot \eta  \, \mathrm{d}\zeta \mathrm{d}\eta
\end{equation}
vanishes when $|\widehat{u}_0|$ is radial, it is clear that we have equality in \eqref{e:Ctrick} and \eqref{e:CtrickBourgainversion} whenever $\widehat{u}_0 \in \mathfrak{G}_r$.

In order to show that there are no further extremisers, it suffices to show that if 
\[
\widehat{u}_0(\eta) = \exp(a|\eta|^2 + b \cdot \eta + c) 
\]
with $a, c \in \mathbb{C}$, $b \in \mathbb{C}^d$ and $\mbox{Re}(a) < 0$, then the quantity in \eqref{e:square} is nonzero whenever $\text{Re}(b)$ is nonzero. For such $b \in \mathbb{C}^d$ we may perform a change of variables $(\zeta,\eta) \mapsto (R \zeta, R\eta)$ in \eqref{e:square}, for a suitably chosen rotation $R$, so that it suffices to consider $b \in \mathbb{C}^d$ such that $\text{Re}(b) = b_1 \mathbf{e}_1$, where $b_1$ is a strictly positive real number. Now
\begin{align*}
\int_{\mathbb{R}^{2d}} |\widehat{u}_0(\zeta)|^2 |\widehat{u}_0(\eta)|^2 \zeta \cdot \eta  \, \mathrm{d}\zeta \mathrm{d}\eta \geq \left(\int_{\mathbb{R}^d} |\widehat{u}_0(\eta)|^2 \eta_1 \, \mathrm{d}\eta \right)^2
\end{align*}
and for such $u_0$ we have
\begin{align*}
\int_{\mathbb{R}^d} |\widehat{u}_0(\eta)|^2 \eta_1 \, \mathrm{d}\eta & = \exp(2\text{Re}(c)) \int_{\mathbb{R}^d} \exp(2\text{Re}(a)|\eta|^2 + 2b_1\eta_1) \eta_1 \, \mathrm{d}\eta \\
& = C \int_{\mathbb{R}} \exp(2\text{Re}(a)\eta_1^2 + 2b_1\eta_1) \eta_1 \, \mathrm{d}\eta_1 \\
& = C \int_0^\infty \exp(2\text{Re}(a)\eta_1^2 + 2b_1\eta_1) \eta_1(1 - \exp(-2b_1\eta_1) ) \, \mathrm{d}\eta_1\,,
\end{align*}
where $C$ is some strictly positive constant depending on $a$ and $c$. Since $b_1 > 0$ it follows that the quantity in \eqref{e:square} is nonzero, as desired.
\end{proof}

\section{One spatial dimension and the proof of Theorem \ref{t:nobars}} \label{section:further}

\subsection{One spatial dimension and the role of the conjugate} In the case of one spatial dimension, there are identities which are the analogues of the sharp estimates in Theorems \ref{t:OTCunify} and \ref{t:Bourgainspaceversion}. We present these identities briefly here, for completeness and to elucidate the role of the complex conjugation causing the change of Fourier multiplier operator from powers of $-\Delta$ to powers of $\mathfrak{D}$, thus justifying our billing of Theorem \ref{t:Bourgainspaceversion} as a natural analogue of Theorem \ref{t:OTCunify}.

For the analogue of \eqref{e:OT''} when $d=1$, we have
\begin{equation*}
\|(-\partial_x^2)^{\sigma}(u\overline{v}) \|_{L^2(\mathbb{R}^2)}^2 = \frac{1}{2(2\pi)^2} \int_{\R^{2}}|\widehat{u}_0(\zeta)|^2|\widehat{v}_0(\eta)|^2 |\zeta - \eta|^{4\sigma-1} \, \mathrm{d}\zeta \mathrm{d}\eta
\end{equation*}
by the well-known approach of writing
\begin{equation} \label{e:identity1}
(u\overline{v})(t,x) = \frac{1}{(2\pi)^2} \int_{\mathbb{R}^2} \exp(ix(\zeta - \eta))\exp(-it(\zeta^2 - \eta^2)) \widehat{u}_0(\zeta) \overline{\widehat{v}_0(\eta)} \,\mathrm{d}\zeta \mathrm{d}\eta\,,
\end{equation}
changing variables $(\zeta,\eta) \mapsto (\zeta - \eta,\zeta^2 - \eta^2)$, using Plancherel's Theorem, and then undoing the previous change of variables. The jacobian from the change of variables is $2|\zeta - \eta|$ and it is clear from \eqref{e:identity1} that this interacts precisely on taking powers of $\partial_x$-derivatives of $(u\overline{v})(t,x)$.

On the other hand, for the analogue of \eqref{e:Bourgainversion}, we have
\begin{equation*}
\|\, |\mathfrak{D}|^\beta (uv) \|_{L^2(\mathbb{R}^2)}^2 =  \frac{1}{(2^{\beta + 2}\pi)^2}  \int_{\R^{2}} |\widehat{u}_0(\zeta) \widehat{v}_0(\eta)+\widehat{u}_0(\eta)\widehat{v}_0(\zeta)|^2 |\zeta - \eta|^{4\beta-1} \, \mathrm{d}\zeta \mathrm{d}\eta
\end{equation*}
and therefore, if $\widehat{u}_0$ and $\widehat{v}_0$ have separated supports,
\begin{equation*}
\|\, |\mathfrak{D}|^\beta (uv) \|_{L^2(\mathbb{R}^2)}^2 =  \frac{1}{2(2^{\beta + 1}\pi)^2}  \int_{\R^{2}} |\widehat{u}_0(\zeta)|^2 |\widehat{v}_0(\eta)|^2  |\zeta - \eta|^{4\beta-1} \, \mathrm{d}\zeta \mathrm{d}\eta\,.
\end{equation*}
This follows in a similar way by writing
\begin{equation*}
(uv)(t,x) = \frac{1}{(2\pi)^2} \int_{\mathbb{R}^2} \exp(ix(\zeta + \eta))\exp(-it(\zeta^2 + \eta^2)) \widehat{u}_0(\zeta) \widehat{v}_0(\eta) \,\mathrm{d}\zeta \mathrm{d}\eta\,,
\end{equation*}
and conjugating use of Plancherel's Theorem with the change of variables $(\zeta,\eta) \mapsto (\zeta + \eta,-\zeta^2 - \eta^2)$ on the half-plane $\mathbb{H} = \{ (\zeta,\eta) \in \mathbb{R}^2 : \zeta < \eta \}$. The jacobian from the change of variables is again $2|\zeta - \eta|$, so it no longer interacts precisely with $\partial_x$-derivatives. Powers of $|\mathfrak{D}|$ do interact precisely with $uv$ since $-|\tau + \frac{1}{2}\xi^2| = \frac{1}{2}|\zeta - \eta|^2
$, where $(\tau,\xi) = (-\zeta^2-\eta^2,\zeta+\eta)$.

Theorem \ref{t:nobars} further reinforces the point that at the level of sharp estimates, it is natural to change the shape of the Fourier multiplier when considering $u^2$ rather than $|u|^2$. We end with a proof of this result.

\begin{proof}[Proof of Theorem \ref{t:nobars}]
If $\Phi$ is the functional given by
\[
\Phi(u_0,v_0) = \frac{\|(-\Delta)^{\frac{2-d}{4}}(uv) \|_{L^2}}{\|u_0\|_{L^2}\|v_0\|_{L^2}}
\]
then one can show that
\[
\lim_{\varepsilon \to 0} \frac{\Phi(u_0+\varepsilon U_0,v_0+\varepsilon V_0)-\Phi(u_0,v_0)}{\varepsilon}=0
\]
for all $(U_0,V_0) \in L^2(\mathbb{R}^d) \times L^2(\mathbb{R}^d)$ if and only if
\begin{equation*}
\mbox{Re} \int_{\mathbb{R}^{d+1}} M \, \widetilde{uv} \, (\overline{\widetilde{uV} + \widetilde{Uv}})     = \mbox{Re} \big( \Phi(u_0,v_0)  (\|v_0\|_2^2 \langle u_0,U_0 \rangle + \|u_0\|_2^2 \langle v_0,V_0 \rangle )\big)
\end{equation*}
for all $(U_0,V_0) \in L^2(\mathbb{R}^d) \times L^2(\mathbb{R}^d)$, where $M(\tau,\xi) = |\xi|^{2-d}$, and $u, v, U$ and $V$ are the evolutions of $u_0, v_0, U_0$ and $V_0$, respectively, under $e^{it\Delta}$. Therefore, by taking $V_0 = 0$ and complex conjugation on both sides, it follows that if $(u_0,v_0)$ is a critical point then necessarily
\begin{equation*}
\int_{\mathbb R^{d+1}} M(\tau,\xi)  \overline{\widetilde{uv}}(\tau,\xi)\widetilde{Uv}(\tau,\xi)  \,\mathrm{d}\tau\mathrm{d}\xi   =   \Phi(u_0,v_0)  \|v_0\|_2^2 \langle U_0,u_0 \rangle
\end{equation*}
for all $U_0 \in L^2(\mathbb{R}^d)$, and hence
\begin{equation} \label{e:ELnec}
\int_\mathbb{R} \exp(it\Delta) \bigg(  \widetilde{M \overline{\widetilde{uv}}}(t,\cdot) v(t,\cdot)\bigg)(y) \, \mathrm{d}t = \Phi(u_0,v_0)  \|v_0\|_2^2 \overline{u_0(y)}
\end{equation}
for almost every $y \in \mathbb{R}^d$. 

We claim that \eqref{e:ELnec} fails to hold whenever $d \geq 3$, 
$
\widehat{u}_0(\eta) = \exp(a|\eta|^2 + b \cdot \eta + c) \in \mathfrak{G}
$ 
and $v_0$ is a scalar multiple of $u_0$ (from our calculation below it will be apparent that such functions do satisfy \eqref{e:ELnec} when $d=2$). In what follows we denote by $C$ a positive constant, depending only on the parameters $a, b, c$ defining $u_0$, and the ambient dimension $d$. The constant $C$ may change from line to line.

Firstly, since
\[
M(\tau,\xi)\overline{\widetilde{uv}}(\tau,\xi) = C\int_{\R^{2d}}  \frac{e^{\overline{a}(|\eta_1|^2 + |\eta_2|^2) + \overline{b} \cdot (\eta_1 + \eta_2)}}{|\eta_1 + \eta_2|^{d-2}} \delta\binom{\tau+|\eta_1|^2+|\eta_2|^2}{\xi-\eta_1-\eta_2}\,\mathrm{d}\eta_1\mathrm{d}\eta_2
\]
and
\[
v(t,x)=e^{it\Delta}v_0(x)=C\int_{\R^d}e^{ix\cdot\zeta-it|\zeta|^2} e^{a|\zeta|^2 + b \cdot \zeta}  \,\mathrm{d}\zeta,
\]
we obtain the expression
\begin{align*}
&\widetilde{M\overline{\widetilde{uv}}}(t,x)v(t,x) \\
& = C\int_{\R^{3d}}e^{-ix\cdot(\eta_1+\eta_2-\zeta_1)+it(|\eta_1|^2+|\eta_2|^2-|\zeta_1|^2)} \frac{e^{a|\zeta_1|^2 + b \cdot \zeta_1} e^{\overline{a}(|\eta_1|^2+|\eta_2|^2) + \overline{b} \cdot (\eta_1 + \eta_2)}}{|\eta_1+\eta_2|^{d-2}} \,\mathrm{d}\eta \mathrm{d}\zeta_1.
\end{align*}
Hence for $t\in\R$ fixed,
\begin{align*}
e^{it\Delta}\left(\widetilde{M\overline{\widetilde{uv}}}(t,\cdot)v(t,\cdot)\right)(y) & =\int_{\R^{4d}}e^{-iy\cdot\zeta_2}  \delta(\eta_1+\eta_2-\zeta_1-\zeta_2)e^{it(|\eta_1|^2+|\eta_2|^2-|\zeta_1|^2-|\zeta_2|^2)} \\
& \qquad \qquad  \times \frac{e^{a|\zeta_1|^2 + b \cdot \zeta_1} e^{\overline{a}(|\eta_1|^2+|\eta_2|^2) + \overline{b} \cdot (\eta_1 + \eta_2)}}{|\eta_1+\eta_2|^{d-2}}  \,\mathrm{d}\eta \mathrm{d}\zeta,
\end{align*}
and integrating this expression with respect to $t$, it follows that the left-hand side of \eqref{e:ELnec} is equal to
\begin{align*}
C \int_{\R^{4d}}e^{-iy\cdot\zeta_2}\delta\binom{|\eta_1|^2+|\eta_2|^2-|\zeta_1|^2-|\zeta_2|^2}{\eta_1+\eta_2-\zeta_1-\zeta_2} 
\frac{e^{a|\zeta_1|^2 + b \cdot \zeta_1} e^{\overline{a}(|\zeta_1|^2+|\zeta_2|^2) + \overline{b} \cdot (\zeta_1 + \zeta_2)}}{|\zeta_1 + \zeta_2|^{d-2}}    \,\mathrm{d}\eta \mathrm{d}\zeta. 
\end{align*}
The integration with respect to $\eta \in \mathbb{R}^{2d}$ is carried out using Lemma \ref{l:convitself} (with $\beta = 0$), from which we see that the left-hand side of \eqref{e:ELnec} simplifies to 
\begin{align*}
C \int_{\R^{2d}} \bigg( \frac{|\zeta_1 - \zeta_2|}{|\zeta_1 + \zeta_2|}  \bigg)^{d-2}  e^{a|\zeta_1|^2 + b \cdot \zeta_1} e^{\overline{a}(|\zeta_1|^2+|\zeta_2|^2) + \overline{b} \cdot (\zeta_1 + \zeta_2)}    e^{-iy\cdot\zeta_2}  \,\mathrm{d}\zeta,
\end{align*}
and it follows that if \eqref{e:ELnec} holds, then
\[
\int_{\R^{d}} \bigg( \frac{|\zeta_1 - \zeta_2|}{|\zeta_1 + \zeta_2|}  \bigg)^{d-2}  e^{2\mathrm{Re}(a)|\zeta_1|^2 + 2\mathrm{Re}(b) \cdot \zeta_1}   \,\mathrm{d}\zeta_1  = C 
\]
for each $\zeta_2 \in \mathbb{R}^d$. This is false for $d \geq 3$ (and clearly true when $d=2$) which completes the proof of Theorem \ref{t:nobars}.
\end{proof}

\begin{acknowledgement}
This work was supported by the European Research Council [grant
number 307617] (Bennett) and the Engineering and Physical Sciences Research Council [grant number EP/J021490/1] (Bez, Pattakos).
\end{acknowledgement}

\end{document}